\newtheorem{thm}{Theorem}[section]
\newcommand{\bt}{\begin{thm}}
\newcommand{\et}{\end{thm}}
\newtheorem{cor}[thm]{Corollary}   
\newcommand{\bc}{\begin{cor}}
\newcommand{\ec}{\end{cor}}
\newtheorem{lem}[thm]{Lemma}   
\newcommand{\bl}{\begin{lem}}
\newcommand{\el}{\end{lem}}
\newtheorem{prop}[thm]{Proposition}
\newcommand{\bp}{\begin{prop}}
\newcommand{\ep}{\end{prop}}
\newtheorem{defn}[thm]{Definition}
\newcommand{\bd}{\begin{defn}}    
\newcommand{\ed}{\end{defn}}
\newtheorem{rmrk}[thm]{Remark}   
\newcommand{\br}{\begin{rmrk}}
\newcommand{\er}{\end{rmrk}}
\newtheorem{example}[thm]{Example}
\newcommand{\mGHto}{\stackrel { \textrm{mGH}}{\longrightarrow} }
\newcommand{\GHto}{\stackrel { \textrm{GH}}{\longrightarrow} }
\newcommand{\Fto}{\stackrel {\mathcal{F}}{\longrightarrow} }
\newcommand{\be}{\begin{equation}}
\newcommand{\ee}{\end{equation}}
\newcommand{\N}{\mathbb{N}}
\newcommand{\R}{\mathbb{R}}
\newcommand{\E}{\mathbb{E}}
\newcommand{\dist}{\operatorname{dist}}
\newcommand{\diam}{\operatorname{Diam}}
\newcommand{\Fm}{{\mathcal F}}
\newcommand{\Hm}{{\mathcal H}}
\newcommand{\Scal}{{\rm Scal}} 
\newcommand{\disjointunion}{\sqcup}
\newcommand{\inv}{^{-1}}
\newcommand{\Lip}{\operatorname{Lip}}
\newcommand{\mass}{{\mathbf M}}
\newcommand{\vol}{\operatorname{Vol}}
\def\rmin{r_{\textrm{min}}}
\DeclareMathOperator{\RS}{RotSym}
\begin{document}

\title{Sequences of three dimensional manifolds with positive scalar curvature}

\author{J. Basilio}
\thanks{J. Basilio was partially supported as a doctoral student by NSF DMS 1006059.}
\address{Pasasdena City College}
\email{jorge.math.basilio@gmail.com}

\author{C. Sormani}
\thanks{C. Sormani was partially supported by NSF DMS 1612049.}
\address{City University of New York Graduate Center and Lehman College}
\email{sormanic@gmail.com}


\keywords{}



\begin{abstract} 
We develop two new methods of constructing sequences of manifolds
with positive scalar curvature that converge in the Gromov-Hausdorff and Intrinsic 
Flat sense to limit spaces with "pulled regions".   The examples created rigorously using these
methods were announced
a few years ago and have influenced the statements of some of Gromov's conjectures 
concerning sequences of manifolds with positive scalar curvature.
Both methods extend 
the notion of ``sewing along a curve'' developed in prior work of the authors with
Dodziuk to create limits that are pulled string spaces.  The first method allows us to sew 
any compact set in a fixed initial manifold to
create a limit space in which that compact set has been scrunched to a single point.  The
second method allows us to edit a sequence of regions or curves in a sequence of distinct manifolds. 
  \end{abstract}

\maketitle

\section{Introduction}

In \cite{Gromov-Plateau}, Gromov challenged mathematicians to explore generalized notions of scalar curvature that persist under Gromov-Hausdorff and Intrinsic Flat convergence.   The most simply stated geometric definition of scalar curvature at a point,
\be\label{E:scalarlimit} 
\Scal(p) = \lim_{r \to 0^{+}} 30\left(\frac{\vol_{\E^{3}}(B(0,r)) - \Hm^{3}(B(p,r))}{r^{2}\cdot \vol_{\E^{3}}(B(0,r))} \right),
\ee
uses Hausdorff measure to replace volume does not behave well under convergence.  In joint work of the authors with Dodziuk, we constructed a sequence of manifolds with positive scalar curvature which converged in the Gromov-Hausdorff and Intrinsic Flat sense to a limit space for which this limit is negative at a point \cite{BDS}.   That example was constructed using a method we called ``sewing along a curve'' and the limit space was a standard three dimensional sphere in which one of the closed geodesics was "pulled to a point".  In that paper we announced additional examples which we now present here, in which we "sew" arbitrary compact sets and create limit spaces
where the entire compact set is "pulled to a point".  The examples created here give new insight into the variety 
of spaces that can appear as limits of manifolds with positive scalar curvature.  Indeed the existence of these examples
and additional examples by the authors which will appear in upcoming work \cite{BS-Tori}, has lead to refinement of Gromov's conjectures in \cite{Sormani-Scalar} and new proposed conjectures at the {\em IAS Emerging Topics on Scalar Curvature and Convergence} organized by Gromov and the second author in Fall 2018.   

The most important theorem concerning manifolds with positive scalar curvature is the Schoen-Yau Positive Mass Theorem \cite{Schoen-Yau-positive-mass}.  This theorem states that a complete noncompact manifold with positive scalar curvature that is asymptotically flat must have positive ADM mass.  The ADM mass is the limit
of Hawking masses of increasingly large round spheres in the asymptotically flat region:
\be
 \mathrm{m}_{\mathrm{ADM}}(M) = \lim_{s \to \infty} \mathrm{m}_{\mathrm{H}}(\Sigma_s) \in [0,\infty],  
\ee
where the Hawking mass of a surface is defined using the integral of the mean curvature
of the surface squared as follows:
\be
 \mathrm{m}_{\mathrm{H}}(\Sigma) = \frac{1}{2}\left(\frac{A}{\omega_{2}}\right)\left(1-\frac{1}{4\pi}\int_\Sigma \left(\frac{H}{2}\right)^{2}\right),
 \ee
Schoen-Yau also prove the rigidity statement that if the ADM mass is zero then the manifold is isometric to
Euclidean space.  In recent years there has been much work exploring how this theorem is stable under 
various notions of convergence by the second author and Bamler, Huang,  Jauregui, Lee, LeFloch, Mantoulidis, Schoen, Sakovich, Stavrov, and others \cite{Bamler-16} \cite{LeeSormani1}\cite{LeFloch-Sormani-1} \cite{HLS}\cite{Jauregui-Lee}\cite{Sormani-Stavrov-1}\cite{Mantoulidis-Schoen}.  \footnote{We welcome additional suggested citations.}

In this paper we construct sequences of asymptotically flat manifolds with positive scalar curvature, which converge smoothly outside of a compact set to Euclidean space, but which have various sets within them sewn to points so that
the limit space does not satisfy the Schoen-Yau rigidity statement.   To construct these sequences we need to develop a second method of sewing manifolds, this time we don't start with a fixed manifold and creating sewing it more and more tightly, but instead start with a sequence of manifolds and sew each term in the sequence more and more tightly.   This is referred to as Method II within.

The paper begins with a review of the notion of a pulled string space first introduced by Burago in discussions with the second author while they were working on ideas leading to  \cite{Burago-Ivanov-area} with Ivanov.  Intuitively these spaces are like pieces of cloth in which one string has been pulled tightly, so that it is identified to a point.   Here we introduce the idea of a pulled metric space, in which an entire compact set has been pulled to a single point and a method we call scrunching which can be applied to prove a sequence of manifolds converges to a given pulled metric space.  This first section is pure metric geometry and does not involve any scalar curvature.  It is somewhat technical if one does not already know the methods Gromov-Hausdorff and Intrinsic Flat convergence.  A review of the necessary background can be found in \cite{BDS} so we do not repeat it here.

In Section 2, we introduce our Method I for creating sequences of manifolds with positive scalar curvature that converge to pulled metric spaces.   We begin by reviewing the construction of tunnels of positive scalar
curvature found by Schoen-Yau \cite{Schoen-Yau-tunnels} and Gromov-Lawson \cite{Gromov-Lawson-tunnels} (cf the appendix to \cite{BDS}).  We review also the method of sewing along a curve by placing the tunnels in a paired pattern along a fixed curve in a fixed manifold to create a new manifold with positive scalar curvature.  One can then sew along the curve more and more tightly, by taking the tunnels smaller and closer together in a precise way, to create a sequence of manifolds with positive scalar curvature that converges to a limit space where that curve has been pulled to a point.  All this was done by the authors with Dodziuk in \cite{BDS}.   In our new Method I we extend this to arbitrary compact sets rather than just curves in a fixed Riemannian manifold.  This involves the development of a new pattern for placing the tunnels, which is perhaps somewhat similar to a pattern the authors used with Kazaras in \cite{BKS} except that we are sewing the compact regions tightly to points in this method.  In Proposition~\ref{sewn-region} we prove that we obtain a manifold with positive scalar curvature that is sewn.   We prove Method I works to produce a pulled limit space in Theorem~\ref{thm-seq-sewn}.   

In Section 3  we apply Method I to present two examples Examples~\ref{sphere-geod} and~\ref{sphere-equator}, although one can easily imagine how it can be applied in many other ways.  The limit in Example~\ref{sphere-geod} is a standard three dimensional sphere with a single geodesic pulled to a point.  The limit in Example~\ref{sphere-equator} is a a standard three dimensional sphere with the equatorial sphere pulled to a point.  One might in fact create sequences which pull any compact set in a standard three sphere to a point, or indeed any
compact set with positive curvature within an arbitrary manifold.  It is crucial that the compact set being sewn to a point
has small balls isometric to balls in spheres of constant sectional curvature but that constant may be arbitrarily small as long as it is positive (see Proposition~\ref{sewn-region} for the precise requirements).   

In Section 4 we develop Method II which provides a method of sewing a sequence of compact sets in a sequence of distinct manifolds.  See Theorem~\ref{prop-seq-sewn} for the precise statement.   Note that this theorem is proven quite generally and does not require positive scalar curvature.  It is about when sequences of manifolds created using a scrunching or sewing of regions converges to a certain space.  When combined with Proposition~\ref{sewn-region} it can be applied to produce new sequences of manifolds with positive scalar curvature.   This method is needed to construct the examples related to the positive mass theorem, because one cannot sew Euclidean space.  One can only sew regions with strictly positive sectional curvature.  Method II allows us to take sequences of manifolds with positive scalar curvature each with
a compact region of positive sectional curvature to produce a Euclidean limit space that has a compact sewn to a point.

In Section 5, we apply Method II to present Examples 6.7-6.9 in which a sequence of asymptotically
flat manifolds with positive scalar curvature and ADM mass converging to 0 converge in the pointed Gromov-Hausdorff and intrinsic flat sense to Euclidean Space with a compact set pulled to a point.  The construction begins using a sequence of
manifolds found in work of the second author with Lee in \cite{LeeSormani2} of smooth spherically symmetric manifolds with positive ADM mass converging to 0 that have rings of constant positive sectional curvature.  These rings are the compact sets that are sewn so that in the limit the ring is pulled to a point.

Some of this research was completed at the CUNY Graduate Center as part of the the first author's doctoral dissertation  completed under the supervision of Dodziuk and the second author.  A few of the examples were announced there, and also in the second author's survey \cite{Sormani-Scalar}, and have been presented many times.  This is the first time the work has been completed rigorously for publication.  It should be noted that additional examples constructed using Method II and announced in the first author's thesis and \cite{Sormani-scalar} concerning limits of almost nonegative scalar curvature
will appear rigorously in upcoming work by the authors \cite{BS-Tori}. We would like to thank Jeff Jauregui, Marcus Khuri, Sajjad Lakzian, Dan Lee, Raquel Perales, Conrad Plaut, Catherine Searle, Dan King, and Philip Ording 
for their interest in this work.

\section{Converging to Pulled Metric Spaces}

In this paper the limits of our sequences of Riemannian manifolds will
no longer be Riemannian manifolds.  They will be pulled metric spaces
created by taking Riemannian manifold and ``pulling a compact set to a 
point''.  We review this notion in the first subsection and then provide
a subsection describing a setting when a sequence of 
Riemannian manifolds converges to such a pulled metric space.
Within this second subsection we recall key methods
used to prove Gromov-Hausdorff, metric measure, and intrinsic flat 
convergence as needed.   We also recall many lemmas proven in the
author's joint work with Dodziuk \cite{BDS}.  Doctoral students are recommended
to read that paper before this one for a thorough review of all the background material.

Note that this section does not involve scalar curvature in any way.   It develops the metric geometry
required to prove our new examples of sequences of manifolds with scalar curvature bounds
converge as we claim they converge.  These techniques will be applied elsewhere in the future
 in upcoming work of the authors.

\subsection{Pulled Metric Spaces} \label{sect-pulled-string}

A special kind of pulled metric space called a pulled string space
was first described to the second author by
Dimitri Burago when they were working together on
ideas that lead towards an intriguing paper of Burago and Ivanov \cite{Burago-Ivanov-Area}.   One
starts with a standard square patch of cloth, $X=[0,1]^2$, and a 
string $C:[0,1]\to X$ where $C(t)=(t, 1/2)$.  One creates the
pulled string space
\be
Y := (X \setminus K) \disjointunion \{p_0\}, \qquad p_0 \in K \, \textrm{fixed}, 
\ee
where $K$ is the image of $C$.  This pulled string space, $Y$,
may intuitively be viewed as the square patch of cloth
with a single thread (identified by the curve $C$) which 
has been pulled tight.   Such pulled string spaces starting 
from an arbitrary geodesic metric space $X$ were described in
detail in joint work of the authors with Dodziuk in \cite{BDS}
where we proved the following proposition:

\begin{prop} \label{pulled-string}
The notion of a metric space with a pulled string is 
a metric space $(Y, d_Y)$ constructed from a metric space $(X,d_X)$ 
of Hausdorff dimension $\ge 2$ 
with a curve $C:[0,1]\to X$, so that
\be\label{pulled-string-def1}
Y = X \setminus C[0,1] \disjointunion \{p_0\}, \qquad p_0=C(0),
\ee
where for $x_i \in Y$ we have
\be\label{pulled-string-def2}
	d_Y(x, p_0) = \min \{ d_X(x, C(t)) : \, t\in [0,1]\}
\ee
and for $x_i \in X \setminus C[0,1]$  we have
\be\label{pulled-string-def3}
d_Y(x_1, x_2) =\min\left\{\, d_X(x_1, x_2), \min\{d_X(x_1, C(t_1)) + d_X(x_2, C(t_2)): \, t_i \in [0,1] \}\, \right\}. 
\ee
If $(X,d,T)$ is a Riemannian manifold then $(Y,d,\psi_\#T)$
is an integral current space whose mass measure is the 
Hausdorff measure on $Y$ and
\be\label{pulledcurveHm}
\mathcal{H}_Y^m(Y)=\mathcal{H}_X^m(X)-\mathcal{H}_X^m(K).
\ee
If $(X, d_X, T)$ is an integral current space then $(Y, d_Y, \psi_{\#}T)$ is
also an integral current space where $\psi: X\to Y$ such that $\psi(x)=x$ for all $x\in X\setminus C[0,1]$ and $\psi(C(t))=p_0$ for all $t\in [0,1]$.   So that
\be\label{massofpulledcurve}
\mass(\psi_{\#}T)=\mass(T)    
\ee
\end{prop}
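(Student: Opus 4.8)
The plan is to recognize $d_Y$ as the quotient pseudometric induced on the set-theoretic quotient of $X$ by the relation $x \sim x'$ iff $x = x'$ or both lie on $K = C[0,1]$, and then to upgrade it to a genuine metric and analyze the collapse map at the level of currents. First I would simplify the inner infimum in \eqref{pulled-string-def3}: since it separates as $\inf_{t_1} d_X(x_1, C(t_1)) + \inf_{t_2} d_X(x_2, C(t_2)) = \dist_X(x_1,K) + \dist_X(x_2,K)$, formula \eqref{pulled-string-def3} reads $d_Y(x_1,x_2) = \min\{d_X(x_1,x_2),\, \dist_X(x_1,K) + \dist_X(x_2,K)\}$, which is exactly the quotient pseudometric for collapsing all of $K$ to $p_0$ (a chain through the single equivalence class $K$ need make only one jump, so two terms suffice), and \eqref{pulled-string-def2} records the distance to the collapsed point as $\dist_X(x,K)$. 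Symmetry is immediate and the triangle inequality follows from the chain characterization of the quotient pseudometric; alternatively one verifies it directly by a short case analysis according to whether each pairwise distance is realized by a direct $d_X$-path or by a path routed through $K$.

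Positive-definiteness is where the hypotheses first enter. Because $K$ is the continuous image of the compact interval $[0,1]$ it is compact, hence closed, so $\dist_X(x,K) > 0$ for every $x \in X \setminus K$. This forces $d_Y(x,p_0) = \dist_X(x,K) > 0$, and for $x_1,x_2 \in X\setminus K$ it rules out $d_Y(x_1,x_2)=0$ unless the direct term $d_X(x_1,x_2)$ vanishes, i.e. unless $x_1 = x_2$. Thus $(Y,d_Y)$ is a genuine metric space. I would then turn to the collapse map $\psi : X \to Y$ and verify it is $1$-Lipschitz: for $x_1,x_2 \in X\setminus K$ the minimum in \eqref{pulled-string-def3} gives $d_Y(\psi x_1,\psi x_2)\le d_X(x_1,x_2)$, the collapsed pair gives $d_Y(p_0,p_0)=0$, and the mixed case is precisely \eqref{pulled-string-def2}. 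Since the pushforward of an integer rectifiable (resp. integral) current under a Lipschitz map is again integer rectifiable (resp. integral), with $\partial(\psi_\# T) = \psi_\#(\partial T)$ by Ambrosio--Kirchheim, $\psi_\# T$ is an integral current carried by $Y$.

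The geometric engine for the remaining statements is that $\psi$ is a \emph{local} isometry on $X \setminus K$: fixing $x_0$ with $\dist_X(x_0,K) = \delta > 0$, any two points of $B_X(x_0,\delta/2)$ satisfy $\dist_X(x_1,K) + \dist_X(x_2,K) > \delta > d_X(x_1,x_2)$, so $d_Y = d_X$ on that ball and $\psi$ preserves the infinitesimal metric structure away from $K$. With this in hand the mass statements follow by writing $\psi_\# T = \psi_\#(T \rstr (X\setminus K)) + \psi_\#(T \rstr K)$. The hypothesis $\dim_{\mathcal H} X \ge 2$ makes the $1$-rectifiable set $K$ satisfy $\Hm_X^m(K)=0$, hence $\|T\|(K)=0$ and $T\rstr K = 0$ (also $\psi_\#(T\rstr K)$ would be supported on the single point $p_0$ and so vanish in dimension $m\ge 1$). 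The local isometry gives $\mass(\psi_\#(T\rstr(X\setminus K))) = \mass(T\rstr(X\setminus K))$, so $\mass(\psi_\# T)=\mass(T)$, which is \eqref{massofpulledcurve}. In the Riemannian case $\|T\|=\Hm_X^m$, so the local isometry identifies $\|\psi_\# T\|$ with $\Hm_Y^m$ on $Y\setminus\{p_0\}$, and since a point is $\Hm^m$-null the mass measure is exactly Hausdorff measure and $\Hm_Y^m(Y) = \Hm_Y^m(Y\setminus\{p_0\}) = \Hm_X^m(X\setminus K) = \Hm_X^m(X) - \Hm_X^m(K)$, giving \eqref{pulledcurveHm}.

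Finally, to confirm $(Y,d_Y,\psi_\# T)$ is a genuine integral current space I would check $Y = \set(\psi_\# T)$: every point of $X\setminus K$ retains its positive lower density, and at $p_0$ a tubular-neighborhood estimate for the $1$-rectifiable set $K$ gives $\|\psi_\# T\|\big(B_Y(p_0,r)\big) \sim c\,r^{m-1}$, so the lower $m$-density at $p_0$ is infinite, in particular positive; completeness of $Y$ (inherited from $X$ with $K$ compact) then prevents extra points from entering $\spt(\psi_\# T)$. I expect the main obstacle to be this analysis at the singular point $p_0$ together with the clean separation of $\psi_\# T$ into a local-isometry part and a $\|T\|$-null part, both of which rest on the dimension hypothesis forcing $K$ to be $\Hm^m$-negligible; by contrast the triangle inequality, while requiring care, is routine once $d_Y$ is identified as a quotient pseudometric.
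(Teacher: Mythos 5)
Your proof is correct, but note that this paper never actually proves Proposition~\ref{pulled-string}: it is quoted as a result established in the authors' earlier work with Dodziuk (\cite{BDS}), so there is no in-paper argument to compare against. Your route --- identifying $d_Y$ as the quotient pseudometric for collapsing the single class $K=C[0,1]$ (so the two-term minimum in \eqref{pulled-string-def3} is exactly the chain infimum), using compactness of $K$ for positive-definiteness, pushing forward by the $1$-Lipschitz collapse map, and then exploiting that $\psi$ is a local isometry on $X\setminus K$ to transfer mass and Hausdorff measure --- is the standard and essentially the intended one, and your density computation at $p_0$ (tubular neighborhoods of a curve giving $\|\psi_\#T\|(B(p_0,r))\sim c\,r^{m-1}$, hence infinite lower $m$-density) is precisely the point that makes $(Y,d_Y,\psi_\#T)$ a genuine integral current space rather than merely an integral current on a metric space. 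The one place to be careful is your assertion that $\dim_{\mathcal H}X\ge 2$ forces $\Hm^m_X(K)=0$: that requires $K$ to be the image of a rectifiable (e.g.\ Lipschitz) curve, not merely a continuous one, since otherwise $C$ could be space-filling. The statement itself hedges on this --- \eqref{pulledcurveHm} subtracts $\Hm^m_X(K)$ while \eqref{massofpulledcurve} asserts no loss of mass, and these are consistent only when $\|T\|(K)=0$; the general compact-set version, Lemma~\ref{pulled-subset-2}, correctly records the loss $\|T\|(K)$. So you should either state the rectifiability of $C$ as a hypothesis or phrase the mass identity as $\mass(\psi_\#T)=\mass(T)-\|T\|(K)$ and observe it reduces to \eqref{massofpulledcurve} for the smooth curves actually used in the paper.
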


Here we will pull an entire compact set, $K \subset X$, to a point.
In our applications, $X$ will be a Riemannian manifold and $K$
a compact submanifold in the Riemannian manifold.   This is 
described here in the following pair of lemmas proven in \cite{BDS}.
Note that it is only called a pulled string space if $K$ is the image of a curve.

\begin{lem} \label{pulled-subset-1}
Given a metric space $(X, d_X)$ and a compact set $K \subset X$
we may define a new metric space $(Y, d_Y)$ pulling the set $K$ to a point $p_0 \in K$ by setting 
\be\label{pulled-set-def1}
Y := (X \setminus K) \disjointunion \{p_0\}, \qquad p_0 \in K \, \,\textrm{fixed}, 
\ee
and, for $x \in Y$, we have 
\be\label{pulled-set-def2}
d_Y(x, p_0) = \min\{ d_X(x, y) : \, y\in K\}
\ee
and, for $x_i \in Y \setminus \{p_0\}$, we have
\be\label{pulled-set-def3}
d_Y(x_1, x_2) =\min\left\{ d_X(x_1, x_2), \min\{d_X(x_1, y_1) + d_X(x_2, y_2): \, y_i\in K\} \right\}. 
\ee
\end{lem}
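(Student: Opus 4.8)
The substance of the lemma is to verify that $d_Y$, as prescribed by \eqref{pulled-set-def2} and \eqref{pulled-set-def3}, genuinely satisfies the axioms of a metric on $Y$; symmetry and nonnegativity are built into the formulas, so the real work is the triangle inequality together with positive definiteness. The plan is to first record two elementary reductions. Writing $f(x) := d_X(x,K) = \min\{d_X(x,y):y\in K\}$ for the distance to the compact set, the double minimum in \eqref{pulled-set-def3} separates, since the two summands depend on the independent variables $y_1,y_2$, into $\min\{d_X(x_1,y_1)+d_X(x_2,y_2): y_i\in K\} = f(x_1)+f(x_2)$. Thus \eqref{pulled-set-def3} reads $d_Y(x_1,x_2)=\min\{d_X(x_1,x_2),\,f(x_1)+f(x_2)\}$ and \eqref{pulled-set-def2} reads $d_Y(x,p_0)=f(x)$. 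Second, $f$ is $1$-Lipschitz on $X$, that is $|f(x)-f(x')|\le d_X(x,x')$, which follows from the triangle inequality in $X$ applied to a point of $K$ realizing $f(x')$ (attained by compactness).

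I would organize the verification by passing to the pseudometric on all of $X$ given by $\bar d(x_1,x_2):=\min\{d_X(x_1,x_2),\,f(x_1)+f(x_2)\}$. The point is that $\bar d$ agrees with $d_Y$ on $X\setminus K$, while for $y\in K$ one has $f(y)=0$, so $\bar d(x,y)=f(x)$ for every $y\in K$ and $\bar d(y,y')=0$ for $y,y'\in K$; hence $(Y,d_Y)$ is exactly the metric quotient of $(X,\bar d)$ that crushes $K$ to the single point $p_0$. In this formulation it suffices to prove that $\bar d$ is a pseudometric on $X$ and to identify its zero locus. Symmetry and nonnegativity of $\bar d$ are immediate from the formula.

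The triangle inequality for $\bar d$ is the crux, and I expect it to be the only genuinely nontrivial point. For $a,b,c\in X$ I would expand the right-hand side $\bar d(a,b)+\bar d(b,c)$, a sum of two minima, as the minimum over its four cross terms $d_X(a,b)+d_X(b,c)$, $d_X(a,b)+f(b)+f(c)$, $f(a)+f(b)+d_X(b,c)$, and $f(a)+2f(b)+f(c)$, and then show that the left-hand side $\bar d(a,c)=\min\{d_X(a,c),\,f(a)+f(c)\}$ lies below each of these four terms separately. The first is handled by $d_X(a,c)\le d_X(a,b)+d_X(b,c)$; the middle two by the $1$-Lipschitz estimates $f(a)\le f(b)+d_X(a,b)$ and $f(c)\le f(b)+d_X(b,c)$ combined with $\bar d(a,c)\le f(a)+f(c)$; and the last by $f(b)\ge 0$. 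Each bound is short, but assembling them is where all the hypotheses are consumed, so this step is the main obstacle.

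Finally, positive definiteness is where compactness enters decisively. From the zero-locus analysis, $\bar d(x_1,x_2)=0$ forces either $d_X(x_1,x_2)=0$, i.e.\ $x_1=x_2$, or $f(x_1)=f(x_2)=0$; and since $K$ is compact, hence closed, $f(x)=0$ holds precisely when $x\in K$. Therefore the only nontrivial identifications made by $\bar d$ are among points of $K$, which is exactly the collapse to $p_0$, and on $Y$ the induced $d_Y$ separates distinct points. Compactness also guarantees that the defining minima are attained. Reassembling these facts yields that $(Y,d_Y)$ is a metric space, completing the proof.
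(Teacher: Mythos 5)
Your argument is correct and complete. Note that the paper itself supplies no proof of Lemma~\ref{pulled-subset-1} --- it is stated as a fact ``proven in [BDS]'' --- so there is nothing internal to compare against; judged on its own terms, your verification does everything the statement requires. The two reductions at the start are both right: the double minimum in (\ref{pulled-set-def3}) does separate into $f(x_1)+f(x_2)$ with $f(x)=d_X(x,K)$, and $f$ is $1$-Lipschitz. Recasting $(Y,d_Y)$ as the metric quotient of the pseudometric $\bar d(x_1,x_2)=\min\{d_X(x_1,x_2),\,f(x_1)+f(x_2)\}$ on all of $X$ is a clean way to organize the check, and the triangle inequality via the four cross-terms is airtight: the pure-$d_X$ term is handled by the triangle inequality in $X$, the two mixed terms by the Lipschitz bound $f(a)\le f(b)+d_X(a,b)$, and the pure-$f$ term by $f(b)\ge 0$. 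Positive definiteness correctly isolates where compactness (really, closedness) of $K$ is used, namely that $f(x)=0$ iff $x\in K$, so the only identification forced by $\bar d$ is the collapse of $K$ to $p_0$; compactness additionally guarantees the minima in (\ref{pulled-set-def2}) and (\ref{pulled-set-def3}) are attained rather than mere infima, which justifies the ``$\min$'' in the statement. The one point you could make more explicit is why the quotient metric is well defined, i.e.\ that $\bar d(x,y)=\bar d(x,y')$ for all $y,y'\in K$, but this is immediate from $\bar d$ being a pseudometric vanishing on $K\times K$, and you have effectively already computed it in showing $\bar d(x,y)=f(x)$ for every $y\in K$. This quotient viewpoint is arguably more systematic than a direct case-by-case verification of the three formulas, at the mild cost of introducing the auxiliary pseudometric.
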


\begin{lem} \label{pulled-subset-2}
If $(X, d_X, T)$ is an integral current space with a compact
subset $K \subset X$ then $(Y, d_Y, \psi_{\#}T)$ is
also an integral current space
where $(Y, d_Y)$ is defined as in Lemma~\ref{pulled-subset-1} and
where $\psi: X\to Y$ such that $\psi(x)=x$ for all $x\in X\setminus K$
and $\psi(q)=p_0$ for all $q\in K$.   In addition
\be\label{massofpulledset}
\mass(\psi_{\#}T)=\mass(T) - ||T||(K)   
\ee
If $(X,d_X,T)$ is a Riemannian manifold then $(Y,d_Y,\psi_\#T)$
is an integral current space whose mass measure is the 
Hausdorff measure on $Y$ and
\be\label{pulledsetHm}
\mathcal{H}_Y^m(Y)=\mathcal{H}_X^m(X)-\mathcal{H}_X^m(K).
\ee
\end{lem}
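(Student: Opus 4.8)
The plan is to realize $(Y,d_Y,\psi_\#T)$ as the image of the integral current space $(X,d_X,T)$ under the collapsing map $\psi$, and then to compute its mass by splitting $T$ over $K$ and its complement. First I would verify that $\psi\colon (X,d_X)\to (Y,d_Y)$ is $1$-Lipschitz; this is immediate from the formulas in Lemma~\ref{pulled-subset-1}, since checking the three cases (both points in $K$, exactly one in $K$, neither in $K$) shows $d_Y(\psi(x_1),\psi(x_2))\le d_X(x_1,x_2)$ in each case, as $d_Y$ is always a minimum that either includes the direct $d_X$-distance or is bounded by it. Extending $\psi$ to the metric completions and invoking the Ambrosio--Kirchheim pushforward theorem, $\psi_\#T$ is then an $m$-dimensional integral current on $\bar Y$, so $(Y,d_Y,\psi_\#T)$ is an integral current space (one checks, as in \cite{BDS}, that its canonical set agrees with $Y$ up to the single point $p_0$, which carries no mass).

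Next I would establish \eqref{massofpulledset} by decomposing $T = T\llcorner(X\setminus K) + T\llcorner K$, which is legitimate because $K$ is compact, hence Borel, and pushing each piece forward by linearity. For the piece over $K$: since $\psi|_K\equiv p_0$ is constant and $m\ge 1$, every tuple $(\pi_1\circ\psi,\dots,\pi_m\circ\psi)$ used to test the pushforward is constant on $K$, so $d(\pi_i\circ\psi)=0$ there and $\psi_\#(T\llcorner K)=0$, contributing no mass. For the piece over $X\setminus K$ the crucial observation is that $\psi|_{X\setminus K}$ is a \emph{local isometry}: for any $x\in X\setminus K$ we have $d_X(x,K)>0$, and on the ball $B(x,r)$ with $r<\tfrac12 d_X(x,K)$ the shortcut through $p_0$ costs at least $d_X(x,K)>2r$, so $d_Y$ coincides with $d_X$ on that ball. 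Since mass is an infinitesimally metric quantity, a $1$-Lipschitz local isometry preserves it, giving $\mass(\psi_\#(T\llcorner(X\setminus K)))=\|T\|(X\setminus K)$. Combining the two pieces yields
\be
\mass(\psi_\#T)=\|T\|(X\setminus K)=\mass(T)-\|T\|(K).
\ee

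For the Riemannian case I would use that the mass measure of the integration current of a Riemannian manifold is its Hausdorff measure, and transfer this via the same local-isometry argument. On $Y\setminus\{p_0\}$, which is locally isometric to $X\setminus K$, the mass measure of $\psi_\#T$ equals $\mathcal{H}_Y^m$, while $\{p_0\}$ is $\mathcal{H}^m$-null for $m\ge 1$. Because $d_Y\le d_X$ globally and the two metrics agree on small balls away from $K$, the $\delta$-approximating Hausdorff measures coincide on corresponding small sets, so Hausdorff measure is preserved under $\psi$ on $X\setminus K$; hence $\mathcal{H}_Y^m(Y)=\mathcal{H}_Y^m(Y\setminus\{p_0\})=\mathcal{H}_X^m(X\setminus K)=\mathcal{H}_X^m(X)-\mathcal{H}_X^m(K)$, which is \eqref{pulledsetHm}.

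The main obstacle I anticipate is the mass computation rather than the Lipschitz or integral-current bookkeeping: one must justify simultaneously that the collapsed part $T\llcorner K$ genuinely vanishes under pushforward (which is where $m\ge 1$ is used) and that $\psi$ is a true local isometry off $K$, so that no mass is lost or created on $X\setminus K$. The local-isometry claim is where the specific form of $d_Y$ as a minimum of the direct distance and a ``through-the-point'' distance does the essential work, and the delicate region is near $K$, where the two competing terms in that minimum become comparable.
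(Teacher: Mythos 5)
The paper does not actually prove this lemma: it is stated as one of ``the following pair of lemmas proven in \cite{BDS}'', so there is no in-text argument to compare yours against. Judged on its own, your proposal is the standard argument and is essentially correct: the three-case check that $\psi$ is $1$-Lipschitz, the Ambrosio--Kirchheim pushforward, the decomposition $T=T\rstr(X\setminus K)+T\rstr K$ with $\psi_{\#}(T\rstr K)=0$ (an $m$-current with $m\ge 1$ supported on the single point $p_0$ vanishes), and the observation that $\psi$ restricted to $X\setminus K$ is a bijective local isometry onto $Y\setminus\{p_0\}$ (your ball-radius estimate $r<\tfrac12 d_X(x,K)$ does force the ``through-$K$'' term in (\ref{pulled-set-def3}) to exceed the direct distance) together give (\ref{massofpulledset}), and the same local-isometry argument transfers Hausdorff measure to give (\ref{pulledsetHm}). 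The one point you should not leave as a parenthetical is the identification of the canonical set: an integral current space requires $Y=\operatorname{set}(\psi_{\#}T)$, the set of points of positive lower $m$-density of $\|\psi_{\#}T\|$, and while density is preserved at every $x\in X\setminus K$ by your local isometry, the density at $p_0$ of the pushed-forward mass measure may well be zero, in which case the space one obtains is $Y\setminus\{p_0\}$ with completion $Y$. The paper itself is sensitive to exactly this issue: in the proof of Lemma~\ref{L:flatconv} it singles out the verification that ``$p_0$ has positive density'' as the one new fact to check, and Theorem~\ref{prop-seq-sewn} is careful to distinguish $N_{\infty,0}$ from $\overline{N}_{\infty,0}=N_\infty$. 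So either add the density computation at $p_0$ (as the paper does later, using $\vol_M(T_r(A_0)\setminus A_0)/r^m$) or weaken the conclusion to say the integral current space has completion isometric to $Y$.
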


\subsection{Scrunching to Pulled Metric Spaces}

In this subsection we generalize a theorem proven by the authors
with Dodziuk in \cite{BDS} 
concerning the limit of a sequence of manifolds which ``scrunch'' 
a compact set to a point as follows:

\begin{defn}\label{def-seq-down}
Given a single Riemannian manifold, $M^3$, with a compact set, 
$A_0\subset M$.  A sequence of manifolds,
\be
N_j^3= (M^3 \setminus A_{\delta_j})\disjointunion A'_{\delta_j} 
\ee
is said to scrunch $A_0$ down to a point
if $A_{\delta}=T_{\delta}(A_0)$ and
$A'_\delta$ satisfies:
\be\label{sewn-curve-tubular'}
\vol(A_{\delta}')\le \vol(A_{\delta})(1+\epsilon) 
\ee
and
\be\label{sewn-curve-vol'}
\vol(N^3) \le \vol(M^3) (1+\epsilon)
\ee
and
\be \label{sewn-curve-diam'}
\diam(A_{\delta}')\le H
\ee
where $\epsilon=\epsilon_j \to 0$ and where $H=H_j \to 0$ and $2\delta_j<H_j$.

The region $A_{0}'$ is referred to as the ``edited region'' constructed via tunnel surgeries and is explicitly detailed in Section~\ref{S:method1-fxdmfld} below.  
\end{defn}

We prove the following new theorem:   

\begin{thm}\label{thm-seq-scrunch}
The sequence $N_j^3$ as in Definition~\ref{def-seq-down} 
where $M^3$ is taken to be compact 
and $A_{0}$ a compact, embedded submanifold of dimension 1 to 3
converges in the
Gromov-Hausdorff sense, 
\be
N_j^{3} \GHto N_\infty,
\ee
and the intrinsic flat sense,
\be
N_j^{3} \Fto N_\infty,     
\ee
where $N_\infty$ is the metric space created by taking $M^3$ and 
pulling $A_0$ to a point $p_0$
as in Lemmas~\ref{pulled-subset-1}-~\ref{pulled-subset-2}.

If, in addition, $\Hm^{3}(A_{0})=0$ then we also have convergence in the metric measure sense
\be
N_j^{3} \mGHto N_\infty.
\ee

\end{thm}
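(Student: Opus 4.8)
The plan is to exhibit, for each $j$, an explicit $\lambda_j$-isometry $\psi_j\colon N_j^3\to N_\infty$ with $\lambda_j\to 0$, deduce Gromov-Hausdorff convergence, and then upgrade to intrinsic flat (and, under the extra hypothesis, metric-measure) convergence by comparing the pushforward integral currents inside a common metric space. I take $\psi_j$ to be the restriction to $M^3\setminus A_{\delta_j}$ of the collapsing map $\psi\colon M^3\to N_\infty$ of Lemma~\ref{pulled-subset-2}, which is the identity off $A_0$, together with the rule that every point of the edited region $A'_{\delta_j}$ is sent to the pulled point $p_0$. Since the image omits only points of $N_\infty$ within $\delta_j$ of $p_0$, the map is $\delta_j$-onto, so the whole content lies in the distortion estimate.

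First I would record the three geometric facts that drive every estimate: for $\delta_j$ below the focal radius of $A_0$ one has $\dist_M(x,A_{\delta_j})=\dist_M(x,A_0)-\delta_j$ for $x\notin A_{\delta_j}$; the geodesic realizing $\dist_M(x,A_0)$ furnishes a path to $\partial A_{\delta_j}$ of that length lying in $M^3\setminus A_{\delta_j}$; and, from the tunnel-surgery construction of Section~\ref{S:method1-fxdmfld}, any two points of $\partial A_{\delta_j}$ are joined through $A'_{\delta_j}$ by a path of length at most $\diam(A'_{\delta_j})\le H_j$. With these in hand the comparison with the pulled metric of Lemma~\ref{pulled-subset-1} splits into two regimes. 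For the lower bound, an $N_j$-path between $x_1,x_2\in M^3\setminus A_{\delta_j}$ either stays outside the tube, and so has length at least $d_M(x_1,x_2)$, or it enters $A'_{\delta_j}$ and hence costs at least $\dist_M(x_1,A_0)+\dist_M(x_2,A_0)-2\delta_j$; this gives $d_{N_j}\ge d_{N_\infty}-2\delta_j$. For the upper bound, according to whether $d_{N_\infty}(x_1,x_2)$ is realized by $d_M(x_1,x_2)$ or by $\dist_M(x_1,A_0)+\dist_M(x_2,A_0)$, I would either follow the $M$-geodesic, rerouting it through $A'_{\delta_j}$ if it meets the tube, or splice the two minimizing geodesics to $A_0$ across $A'_{\delta_j}$; both produce $d_{N_j}\le d_{N_\infty}+H_j$. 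Treating the whole edited region as the single point $p_0$ costs a further $\delta_j+H_j$, so $\psi_j$ is a $\lambda_j$-isometry with $\lambda_j=O(H_j)\to 0$ and Gromov-Hausdorff convergence follows.

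For the intrinsic flat statement I would glue isometric copies of $N_j^3$ and $N_\infty$ into a common metric space $Z_j$ along the $\lambda_j$-distorted graph of $\psi_j$, as in the convergence lemmas of \cite{BDS}, and bound $d_{\mathcal F}(N_j^3,N_\infty)$ by the filling mass of $\varphi_{j\#}[[N_j^3]]-\varphi_{\infty\#}[[N_\infty]]$ in $Z_j$. This current splits into a cylinder between the two graphs, whose mass is controlled by $\lambda_j$ together with the uniformly bounded volumes $\vol(N_j^3)\le\vol(M^3)(1+\epsilon_j)$ of Definition~\ref{def-seq-down}, plus a piece living over the edited region, supported in a ball of radius $O(H_j)$ about $p_0$ and of mass at most $\vol(A'_{\delta_j})\le\vol(A_{\delta_j})(1+\epsilon_j)$; coning this piece to $p_0$ fills it with mass $O(H_j)$. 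Both contributions vanish as $j\to\infty$, yielding $N_j^3\Fto N_\infty$, and no hypothesis on $\Hm^3(A_0)$ is needed because intrinsic flat limits simply discard the mass lost in the collapsing region.

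For the metric-measure statement I would verify that the mass measures $\Hm^3\rstr N_j^3$ pushed forward by $\psi_j$ converge weakly to $\Hm^3\rstr N_\infty$. Away from any neighborhood of $p_0$ this is immediate, since $\psi_j$ is an isometry there and $\vol(A_{\delta_j})\to\Hm^3(A_0)$; the only possible defect is an atom of mass $\lim_j\vol(A'_{\delta_j})=\Hm^3(A_0)$ at $p_0$, and the hypothesis $\Hm^3(A_0)=0$ removes exactly this atom, leaving the full Hausdorff measure of \eqref{pulledsetHm}. I expect the genuine obstacle to be the upper-bound half of the distortion estimate in the regime where the minimizing $M$-geodesic dips into the tube: one must show this forces $d_M(x_1,x_2)$ and $\dist_M(x_1,A_0)+\dist_M(x_2,A_0)$ to agree to within $2\delta_j$, so that rerouting through $A'_{\delta_j}$ is cheap, and one must control this uniformly as $A_0$ ranges over embedded submanifolds of dimension $1$, $2$, and $3$, where the tube geometry and the connectivity of $M^3\setminus A_{\delta_j}$ differ markedly --- which is exactly where the explicit properties of $A'_{\delta_j}$ from Section~\ref{S:method1-fxdmfld} enter.
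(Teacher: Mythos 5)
Your overall route is the same as the paper's: an explicit almost-isometry $F_j\colon N_j^3\to N_\infty$ that is the identity off the tube and sends the edited region to $p_0$ (the paper's Lemmas~\ref{L:surjectivemaps}--\ref{L:almostiso}, imported from \cite{BDS}), a common-space filling argument for the flat distance (the paper instead invokes Lemma~\ref{L:flatconv}, which rests on \cite{Sormani-AA}), and weak convergence of the pushed-forward Hausdorff measures for the metric-measure statement (Lemma~\ref{L:mGHconv}). Your distortion estimates and the resolution of the worry in your last paragraph (if the minimizing $M$-geodesic enters $A_{\delta_j}$ then $d_M(x_1,x_2)\ge \dist_M(x_1,A_0)+\dist_M(x_2,A_0)-2\delta_j$, so rerouting through $A'_{\delta_j}$ costs only $O(H_j+\delta_j)$) are exactly what those cited lemmas contain, so this part is fine.

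There is, however, one genuine gap, and it is precisely the single step the paper adds beyond citing \cite{BDS}. The intrinsic flat limit is an integral current space whose underlying set consists of the points of positive lower density of the mass measure; a priori the limit could be $N_\infty\setminus\{p_0\}$ rather than $N_\infty$, and these are not isometric (one is not complete). Your remark that ``intrinsic flat limits simply discard the mass lost in the collapsing region'' addresses the total mass, not whether the pulled point $p_0$ survives as a point of the limit space. To conclude $N=\bar N$ one must check that
\be
\liminf_{r\to 0}\frac{\vol_{N_\infty}(B(p_0,r))}{r^3}
=\liminf_{r\to 0}\frac{\vol_M\bigl(T_r(A_0)\setminus A_0\bigr)}{r^3}>0,
\ee
which is where the hypothesis that $A_0$ is a compact \emph{embedded submanifold} of dimension $d\in\{1,2,3\}$ actually enters: then $\vol_M(T_r(A_0)\setminus A_0)\sim c\,r^{3-d}\Hm^d(A_0)$ and the liminf is $+\infty$. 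Your proposal never uses this hypothesis and never verifies the density condition, so as written it proves flat convergence to an integral current space supported in $N_\infty$ but does not identify that space as $N_\infty$ itself. Adding this one computation closes the gap.
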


Note that when $A_0$ is the image of a curve, then $N_\infty$ is a pulled thread space as in Remark~\ref{pulled-string}.   In \cite{BDS}, the
authors and Dodziuk proved 
Theorem~\ref{thm-seq-scrunch} in that special case only with stronger consequences that only hold in that setting.

Within this proof we will state four lemmas proven by the
authors with Dodziuk in \cite{BDS}.   We state them because
we will apply them again later in the paper.  We state them within the proof
so that we may motivate and explain them.

\begin{proof}
One first constructs a map $F_j: N_j \to N_\infty$ 
which is the identity away from the region containing all
the tunnels and maps the entire regions containing the tunnels to a single 
point.  More precisely one applies the following lemma from \cite{BDS}:

\begin{lem}\label{L:surjectivemaps}
Given $M^3$ a compact Riemannian manifold (possibly with boundary) and a smooth embedded compact zero to three 
dimensional submanifold $A_0\subset M^3$ (possibly with boundary),
and $N_j$ as in Definition~\ref{def-seq-down}.
Then for $j$ sufficiently large
there exist surjective Lipschitz maps
\be\label{surjlipmaps4}
F_j: N_j^3 \to N_\infty \textrm{ with } \Lip(F_j) \le 4
\ee
where $N_\infty$ is the metric space created by taking $M^3$ and 
pulling $A_0$ to a point $p_0$
as in Lemmas~\ref{pulled-subset-1}-~\ref{pulled-subset-2}. 
\end{lem}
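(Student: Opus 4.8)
The plan is to build $F_j$ by collapsing the edited region to $p_0$ and, on the unchanged part of $M$, filling in the missing annulus around $A_0$ by a radial reparametrization of the tubular neighborhood. Write $\rho(x)=\dist_M(x,A_0)$ and fix $j$ large enough that $2\delta_j$ lies below the normal injectivity (focal) radius of the compact embedded submanifold $A_0$, so that the normal exponential map $\exp^{\perp}$ is a diffeomorphism from the $2\delta_j$-disk bundle of $A_0$ onto the tube $T_{2\delta_j}(A_0)$. Let $g\colon[\delta_j,\infty)\to[0,\infty)$ be the piecewise-linear function $g(r)=2(r-\delta_j)$ for $\delta_j\le r\le 2\delta_j$ and $g(r)=r$ for $r\ge 2\delta_j$; it is continuous, with $g(\delta_j)=0$ and $g(2\delta_j)=2\delta_j$, and one checks directly that $g(r)\le 2(r-\delta_j)$ and $g(r)\le r$ for all $r\ge\delta_j$.

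Then I would define $F_j$ on $N_j=(M\setminus A_{\delta_j})\disjointunion A'_{\delta_j}$ by sending all of $A'_{\delta_j}$ to $p_0$, setting $F_j=\mathrm{id}$ on $\{\rho\ge 2\delta_j\}$, and, for $x=\exp^{\perp}(a,v)$ with $\delta_j\le\rho(x)<2\delta_j$, setting $F_j(x)=\Pi\bigl(\exp^{\perp}(a,\tfrac{g(\rho)}{\rho}v)\bigr)$, where $\Pi\colon M\to N_\infty$ is the $1$-Lipschitz collapse of $A_0$ to $p_0$ from Lemmas~\ref{pulled-subset-1}--\ref{pulled-subset-2}. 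This is continuous: at $\rho=\delta_j$ the formula lands on $A_0$, hence on $p_0$, matching the image of $A'_{\delta_j}$ along $\bdry A_{\delta_j}$, while at $\rho=2\delta_j$ it agrees with the identity. Surjectivity is immediate, since the identity covers $\{\rho\ge 2\delta_j\}$, the radial reparametrization is a bijection of $(\delta_j,2\delta_j)$ onto $(0,2\delta_j)$ and so covers every point of $M\setminus A_0$ with $0<\rho<2\delta_j$, and $A'_{\delta_j}$ covers $p_0$.

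It then remains to bound $\Lip(F_j)$. I would estimate the $N_\infty$-length of $F_j\circ\gamma$ for a near-minimizing $N_j$-path $\gamma$ between two points, decomposing $\gamma$ into arcs lying in $A'_{\delta_j}$ and arcs lying in $M\setminus A_{\delta_j}$. Each $A'_{\delta_j}$-arc maps to the single point $p_0$ and so contributes $N_\infty$-length zero; this is exactly what lets the small-diameter bottleneck $A'_{\delta_j}$ play the role of the shortcut through $p_0$ in $N_\infty$. On the arcs in $M\setminus A_{\delta_j}$ the $N_j$-length equals the $M$-length, so it suffices to show the reparametrization has $M$-to-$N_\infty$ Lipschitz constant at most $2+O(\delta_j)$. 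In Fermi coordinates $(a,r,\theta)$ the map acts radially by $g'(r)=2$ and scales the level set $\{\rho=r\}$ to $\{\rho=g(r)\}$, so the angular directions contract by the factor $g(r)/r\le 1$, while the tangential $A_0$-directions are distorted only by $O(\delta_j)$ because $M$ has bounded curvature and $r<2\delta_j\to 0$; composing with the $1$-Lipschitz collapse $\Pi$ gives operator norm $\le 2+O(\delta_j)$. Summing the arc contributions and taking the infimum over $\gamma$ yields $d_{N_\infty}(F_j(x_1),F_j(x_2))\le(2+O(\delta_j))\,d_{N_j}(x_1,x_2)\le 4\,d_{N_j}(x_1,x_2)$ for $j$ large.

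The main obstacle is precisely this last estimate with the explicit constant $4$: one must control the tangential distortion of the tube uniformly (which forces the hypothesis that $j$ be large, so that $\delta_j$ sits well inside the focal radius and curvature corrections are negligible) and, more subtly, confirm that the collapsed region $A'_{\delta_j}$ cannot create $N_j$-shortcuts that $F_j$ fails to respect. The length-of-image argument resolves the latter cleanly, since arcs through $A'_{\delta_j}$ cost nothing after applying $F_j$, mirroring the definition of $d_{N_\infty}$ through $p_0$ in Lemmas~\ref{pulled-subset-1}--\ref{pulled-subset-2}; the radial factor $2$ together with the $o(1)$ tangential correction is what leaves room below the stated bound. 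A minor separate check is that the construction degenerates correctly across codimensions: for $\dim A_0=3$ the tube is a one-sided collar of $\bdry A_0$ and there is no angular contraction, while for $\dim A_0\le 2$ the normal disk bundle supplies the contracting angular directions, but in every case the radial factor is the only one exceeding $1$.
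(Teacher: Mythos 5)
Your proposal is correct and follows the same basic scheme the paper indicates (collapse the edited region $A'_{\delta_j}$ to $p_0$, identity far from $A_0$), though note the paper does not actually prove this lemma here --- it imports it from \cite{BDS} with only the one-line description of $F_j$ quoted above. Your radial reparametrization of the annulus $\{\delta_j\le\rho\le 2\delta_j\}$ is exactly the ingredient that the paper's literal description omits but that is needed for the claimed surjectivity (otherwise the points of $T_{\delta_j}(A_0)\setminus A_0$ would be missed), and your factor-of-$2$ stretch plus the $O(\delta_j)$ tangential correction accounts cleanly for the stated constant $4$.
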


In \cite{Gromov-metric}, Gromov proved that
the Gromov-Hausdorff distance between metric spaces
satisfies,
\be
d_{GH}(N_j, N_\infty) \le 2H_j,
\ee
if there is a map $F_j: N_j^3 \to N_\infty$ which is an
$H_j$-almost isometry:
\be
|d_{N_\infty}(F_j(p), F_j(q)) - d_{N_j}(p,q)| \le H_j
\ee
and
\be 
\forall y\in N_\infty \,\,\,\exists x\in N_j\textrm{ such that }
d_{N_\infty}(F_\infty(x),y) \le H_j.
\ee
The converse is also true requiring again a factor of two:
if there is an $H_j$-almost isometry then the Gromov-Hausdorff
distance between the spaces is $\le 2H_j$.  Applying this
we have the following lemma proven in \cite{BDS}:

\begin{lem}\label{L:almostiso}
Given $N_j^{3}$ as in Definition~\ref{def-seq-down},
the maps $F_j: N_j^3 \to N_\infty$ of (\ref{surjlipmaps4})
are $H_j$-almost isometries with $\lim_{j\to \infty}H_j=0$.
Thus 
\be\label{NjGHtoNinfty}
N_j \GHto N_\infty.
\ee
\end{lem}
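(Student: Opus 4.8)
The plan is to verify the two defining properties of an $H_j'$-almost isometry for the maps $F_j$ of (\ref{surjlipmaps4}), with $H_j' = C H_j \to 0$, and then invoke Gromov's criterion quoted just above to conclude $N_j \GHto N_\infty$. The almost-surjectivity requirement is free: since each $F_j$ is surjective by Lemma~\ref{L:surjectivemaps}, every $y \in N_\infty$ is literally $F_j(x)$ for some $x$. So the entire content is the metric-distortion bound $|d_{N_\infty}(F_j(p),F_j(q)) - d_{N_j}(p,q)| \le C H_j$, which I would prove in the two directions below. Throughout I write $d_M$ for the metric of the fixed background manifold, and I use that $N_j$ agrees with $M$ (same Riemannian metric) on the outer region $M \setminus A_{\delta_j}$, while $F_j$ is the identity there and collapses the edited region $A'_{\delta_j}$ to $p_0$; the two quantitative inputs are $\diam(A'_{\delta_j}) \le H_j$ and $2\delta_j < H_j$.

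For the upper bound $d_{N_\infty}(F_j(p),F_j(q)) \le d_{N_j}(p,q)$ I would push a near-minimizing $N_j$-path $\gamma$ from $p$ to $q$ forward by $F_j$. On the portions of $\gamma$ lying in the outer region $F_j$ is the identity and $d_{N_\infty} \le d_M$, so these portions map to paths of no greater $N_\infty$-length; the portions inside $A'_{\delta_j}$ collapse to the single point $p_0$ and contribute length $0$; since $F_j$ is continuous the image is an admissible path in $N_\infty$, whence $d_{N_\infty}(F_j(p),F_j(q))$ is at most the $N_j$-length of $\gamma$. (When $p$ or $q$ lies in $A'_{\delta_j}$ this argument costs an extra $\delta_j$, because $d_{N_\infty}(p_0,\cdot)=d_M(\cdot,A_0)$ and exiting the edited region lands on $\bdry A_{\delta_j}$, a set at $d_M$-distance $\delta_j$ from $A_0$; this is absorbed since $\delta_j < H_j$.)

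For the lower bound $d_{N_j}(p,q) \le d_{N_\infty}(F_j(p),F_j(q)) + C H_j$ I would reduce to outer points $p,q$ (any point of $A'_{\delta_j}$ is within $H_j$ of $\bdry A_{\delta_j}$ by the diameter bound, so replacing it by a boundary point perturbs both sides by at most $H_j$) and then use the explicit formula for $d_\infty = d_{N_\infty}$ from Lemma~\ref{pulled-subset-1}, namely $d_\infty(p,q) = \min\{d_M(p,q),\ \min_{y_1,y_2\in A_0}(d_M(p,y_1)+d_M(q,y_2))\}$. If the minimum is realized by the through-$p_0$ term, I build an $N_j$-path by running the $M$-geodesics $p\to y_1$ and $q\to y_2$ only up to their first hits $a_1,a_2 \in \bdry A_{\delta_j}$ (each such initial arc stays in the outer region, hence is admissible in $N_j$, and has length $\le d_M(p,y_1)$, resp. $\le d_M(q,y_2)$) and then joining $a_1$ to $a_2$ inside the edited region at cost $d_{N_j}(a_1,a_2) \le \diam(A'_{\delta_j}) \le H_j$; summing gives $d_{N_j}(p,q) \le d_\infty(p,q) + H_j$. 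If instead the minimum is the direct term $d_M(p,q)$, then either a minimizing $M$-geodesic avoids the open tube $A_{\delta_j}$ entirely — in which case it is already an admissible $N_j$-path and $d_{N_j}(p,q)\le d_M(p,q)=d_\infty(p,q)$ — or it enters the tube, so it passes within $\delta_j$ of $A_0$, and then the through-$p_0$ route is no longer than $d_M(p,q)+2\delta_j$, returning us to the previous construction with an extra $2\delta_j \le H_j$.

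The main obstacle is precisely this last dichotomy: controlling what a minimizing $M$-geodesic does as it meets the thin tubular neighborhood $A_{\delta_j}=T_{\delta_j}(A_0)$. The clean point to exploit is that one never has to follow such a geodesic inside the tube — one either avoids the tube or re-routes through $p_0$ and the edited region — so the only geometric facts needed are that $\bdry A_{\delta_j}$ lies at $d_M$-distance $\delta_j$ from $A_0$ and that hitting the tube forces $\delta_j$-proximity to $A_0$, both of which hold because $A_0$ is a compact embedded submanifold and $\delta_j$ is small enough that $A_{\delta_j}$ is a genuine tubular neighborhood. Combining the two directions yields $|d_{N_\infty}(F_j(p),F_j(q)) - d_{N_j}(p,q)| \le C H_j$ for a universal constant $C$; since $H_j \to 0$, each $F_j$ is a $(CH_j)$-almost isometry and Gromov's estimate gives $d_{GH}(N_j,N_\infty) \le 2CH_j \to 0$, i.e.\ $N_j \GHto N_\infty$.
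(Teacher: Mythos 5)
Your proposal follows the same strategy the paper indicates for this lemma: verify that $F_j$ is an almost isometry with distortion controlled by $\diam(A'_{\delta_j})\le H_j$ and $2\delta_j<H_j$, then invoke Gromov's criterion; the paper itself defers the distortion estimate to \cite{BDS}, and your reconstruction of that estimate is essentially correct. The one imprecision worth repairing is that $F_j$ is neither continuous nor exactly metric non-increasing---it jumps by $\delta_j$ across $\partial A_{\delta_j}$, since a boundary point lies at $d_{N_\infty}$-distance $\delta_j$ from $p_0$---so the upper bound $d_{N_\infty}(F_j(p),F_j(q))\le d_{N_j}(p,q)$ needs an additive $2\delta_j$ even when both points lie in the outer region (bound the through-the-edited-region case by $d_M(p,A_0)+d_M(q,A_0)\le d_M(p,a)+d_M(b,q)+2\delta_j$ using the first entry and last exit points $a,b\in\partial A_{\delta_j}$ of a near-minimizing path, rather than pushing the path forward); this error is harmlessly absorbed by $2\delta_j<H_j$, exactly as in your own case analysis for the lower bound.
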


In order to prove metric measure convergence one needs only to show
$F_j$ push forward the measures to measures that converge to the measure
on the limit space.  This only works when $A_0$ has measure $0$.
One obtains this in the following lemma proven in \cite{BDS}:

\begin{lem}\label{L:mGHconv}
Given $N_j^3 \to N_\infty$ as in Lemma~\ref{L:surjectivemaps}
endowed with the Hausdorff measures, then 
we have metric measure convergence 
if $A_0$ has $\mathcal{H}^{3}$-measure $0$.
\end{lem}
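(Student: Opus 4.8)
The plan is to leverage the Gromov--Hausdorff convergence already established in Lemma~\ref{L:almostiso}: measured Gromov--Hausdorff convergence requires only, in addition to almost isometries, that the pushforward measures converge weakly, so with the maps $F_j\colon N_j^3 \to N_\infty$ of (\ref{surjlipmaps4}) in hand it suffices to prove
\be\label{E:mmpush}
(F_j)_\#\,\mathcal{H}^3_{N_j} \ \rightharpoonup\ \mathcal{H}^3_{N_\infty}
\ee
weakly as finite Borel measures on the compact space $N_\infty$. First I would use the explicit structure of $F_j$ recalled from Lemma~\ref{L:surjectivemaps}: it is the identity on $N_j^3 \setminus A'_{\delta_j} = M^3 \setminus A_{\delta_j}$ and collapses the edited region $A'_{\delta_j}$ to the single point $p_0$. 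Since the Riemannian metric of $N_j^3$ coincides with that of $M^3$ on $M^3 \setminus A_{\delta_j}$, the Hausdorff measures agree there, and $F_j$ transports the total mass $\mathcal{H}^3_{N_j}(A'_{\delta_j}) = \vol(A'_{\delta_j})$ entirely onto $p_0$. Hence the pushforward decomposes as
\be
(F_j)_\#\,\mathcal{H}^3_{N_j} \;=\; \mathcal{H}^3_{M} \rstr \left(M^3 \setminus A_{\delta_j}\right) \;+\; \vol(A'_{\delta_j})\, \delta_{p_0},
\ee
where $\delta_{p_0}$ denotes the unit point mass at $p_0$.

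Then I would test this identity against an arbitrary $\phi \in C(N_\infty)$ and pass to the limit term by term. For the first term, as $\delta_j \to 0$ the tubular neighborhoods $A_{\delta_j} = T_{\delta_j}(A_0)$ shrink to $A_0$, so $\mathbf{1}_{M^3 \setminus A_{\delta_j}} \to 1$ pointwise off $A_0$, hence $\mathcal{H}^3$-almost everywhere once we invoke the hypothesis $\mathcal{H}^3(A_0) = 0$; dominated convergence (dominating by $\|\phi\|_\infty \mathbf{1}_M$) then yields
\be
\int_{M^3 \setminus A_{\delta_j}} \phi \, d\mathcal{H}^3_M \ \longrightarrow\ \int_{M^3 \setminus A_0} \phi \, d\mathcal{H}^3_M \;=\; \int_{N_\infty} \phi \, d\mathcal{H}^3_{N_\infty},
\ee
the final equality being precisely the identification of the limit measure recorded in Lemma~\ref{pulled-subset-2} and (\ref{pulledsetHm}), together with $\mathcal{H}^3(\{p_0\}) = 0$. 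For the second term, the relevant bound is the volume constraint (\ref{sewn-curve-tubular'}), which gives $\vol(A'_{\delta_j}) \le \vol(A_{\delta_j})(1 + \epsilon_j)$; since $A_0$ is a compact embedded submanifold with $\mathcal{H}^3(A_0) = 0$, its tubular neighborhoods obey $\vol(T_{\delta_j}(A_0)) \to 0$, so $\vol(A'_{\delta_j}) \to 0$ and the atom $\phi(p_0)\,\vol(A'_{\delta_j})$ disappears in the limit. Combining the two limits gives (\ref{E:mmpush}), and together with Lemma~\ref{L:almostiso} this is exactly $N_j^3 \mGHto N_\infty$.

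The crux of the argument, and the one place where the hypothesis $\mathcal{H}^3(A_0)=0$ is genuinely needed, is the vanishing of the atom at $p_0$. I expect this to be the main obstacle only in the sense of bookkeeping: one must control $\vol(T_{\delta_j}(A_0))$, which for a smooth embedded submanifold of dimension strictly below $3$ follows from the standard tubular-neighborhood estimate $\vol(T_\delta(A_0)) = O(\delta^{\,3 - \dim A_0})$, and which for a genuinely three-dimensional region simply fails because $\vol(T_\delta(A_0)) \to \vol(A_0) > 0$. In that excluded case the pushforward measures would retain a point mass $\vol(A_0)\,\delta_{p_0}$ that $\mathcal{H}^3_{N_\infty}$ cannot match, so metric measure convergence genuinely breaks down; this is exactly why the conclusion is restricted to $\mathcal{H}^3(A_0) = 0$. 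The remaining verification, that $\mathcal{H}^3_{N_j} \rstr (M^3 \setminus A_{\delta_j}) = \mathcal{H}^3_M \rstr (M^3 \setminus A_{\delta_j})$ because Hausdorff measure is determined by the unchanged local metric, is routine.
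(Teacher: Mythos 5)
Your proposal is correct and takes exactly the route the paper intends: the paper defers the details to \cite{BDS} after remarking that one need only show the maps $F_j$ push the Hausdorff measures forward to measures converging weakly to the limit measure, and your decomposition of $(F_j)_\#\mathcal{H}^3_{N_j}$ into $\mathcal{H}^3_M\rstr(M^3\setminus A_{\delta_j})$ plus an atom of mass $\vol(A'_{\delta_j})\to 0$ at $p_0$ is precisely that verification, with the hypothesis $\mathcal{H}^{3}(A_0)=0$ entering exactly where it must. The only caveat is that the $F_j$ of Lemma~\ref{L:surjectivemaps} need not be literally the identity on all of $M^3\setminus A_{\delta_j}$ (they are adjusted on a collar to be surjective), but that collar has vanishing volume and bounded Lipschitz distortion, so your limit computation is unaffected.
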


Whenever one has Gromov-Hausdorff convergence, it was proven by the second author with Wenger in \cite{SW-JDG} that a subsequence converges to
an intrinsic flat limit lying within the Gromov-Hausdorff limit.  However due to collapse or cancellation the limits might not agree.  The intrinsic flat limit is
always a rectifiable metric space lying within the Gromov-Hausdorff limit.
In order to prove intrinsic flat convergence one may apply a
theorem of the second author proven in \cite{Sormani-AA}.  This was
done to show the following lemma. 

\begin{lem}\label{L:flatconv}
Let $N_j^3$ be exactly as in Lemma~\ref{L:surjectivemaps} and Lemma~\ref{L:almostiso} 
where we assume $M^{3}$ is compact and we have a compact set,
$A_0\subset M\setminus \partial M$.
Then there exists an integral current space $N$ such that $\bar{N}$ is isometric to $N_\infty$ and 
\be
N_j \Fto N_{\infty}, [FIXED]
\ee
where $N_{\infty}$ is given the integral current as in Lemma~\ref{pulled-subset-2},
and when $A_0$ has $\mathcal{H}^{3}$-measure $0$,
\be \label{L:massconv}
\mass(N_j) \to \mass(N)=\mathcal{H}^3(N).   
\ee
\end{lem}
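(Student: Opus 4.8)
The plan is to upgrade the Gromov-Hausdorff convergence of \lemref{L:almostiso} to intrinsic flat convergence by combining the compactness theorem of \cite{SW-JDG} with the quantitative estimate of \cite{Sormani-AA}, using the surjective almost-isometries $F_j$ as the bridge. Write $T_j$ for the integral current underlying $N_j$ and $\psi_\# T$ for the pulled current of \lemref{pulled-subset-2} on $N_\infty$. By \cite{SW-JDG}, since $N_j\GHto N_\infty$ and the $N_j$ have volume $\le\vol(M^3)(1+\epsilon)$ by (\ref{sewn-curve-vol'}) and diameter bounded in terms of the compact $M^3$, a subsequence converges in the intrinsic flat sense to an integral current space $N$ whose completion $\bar N$ embeds isometrically into $N_\infty$; a priori this limit could shed mass through collapse or cancellation. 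The content of the lemma is to rule this out via \cite{Sormani-AA}, showing that $N$ carries all of $\psi_\# T$ and that $\bar N=N_\infty$, after which uniqueness of the flat limit promotes the statement to convergence of the full sequence.

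The inputs to the estimate of \cite{Sormani-AA} are precisely $\Lip(F_j)\le 4$ from \lemref{L:surjectivemaps} and the $H_j$-almost isometry property with $H_j\to 0$ from \lemref{L:almostiso}. The mechanism is to realize $N_j$ and $N_\infty$ as isometrically embedded copies in a common complete metric space $Z_j$ in which the two copies are separated by height $\lesssim H_j$ along the graph of $F_j$, and then to produce a small filling of the difference of the two pushforward currents. That filling has two pieces. The first is the homotopy cylinder swept between the copies; of height $\lesssim H_j$ over a $3$-current of mass $\le\vol(M^3)(1+\epsilon)$, its mass is $\lesssim H_j\,\vol(M^3)(1+\epsilon)\to 0$. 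The second handles the discrepancy $D_j:=F_{j\#}T_j-\psi_\# T$ on $N_\infty$: since $F_j$ is the identity on $M^3\setminus A_{\delta_j}$ and sends $A'_{\delta_j}$ into $B(p_0,H_j)$ by (\ref{sewn-curve-diam'}), while each point of $A_{\delta_j}\setminus A_0$ lies within $\delta_j<H_j/2$ of $p_0$ in $N_\infty$, the cycle $D_j$ is supported in $B(p_0,H_j)$ with mass $\lesssim\vol(A_{H_j})$ (bounding the image of $A'_{\delta_j}$ by (\ref{sewn-curve-tubular'})); the cone inequality then fills $D_j$ at cost $\lesssim H_j\,\vol(A_{H_j})\to 0$. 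Folding the cone into the cylinder yields a single filling of vanishing mass, so $d_{\mathcal F}(N_j,N_\infty)\to 0$ with limit current $\psi_\# T$ and $\bar N$ isometric to $N_\infty$.

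I expect the main obstacle to be the treatment of $D_j$ in the borderline case $\mathcal H^3(A_0)>0$, where $A'_{\delta_j}$ retains volume $\approx\vol(A_0)$ even as its diameter vanishes. One must confirm that $D_j$ is genuinely a cycle supported in the shrinking ball $B(p_0,H_j)$, so that the cone inequality discards its fixed mass at a cost scaled by the vanishing radius, rather than an object that folds back with reversed orientation onto the bulk current and cancels mass there. The bound $\Lip(F_j)\le 4$ keeps $\mass(F_{j\#}(T_j\rstr A'_{\delta_j}))\le 64\,\vol(A'_{\delta_j})$ finite and confined to $B(p_0,H_j)$, while the almost-isometry property forces $F_j$ to act injectively and orientation-preservingly on the bulk $M^3\setminus A_{\delta_j}$, transporting $T_j$ there cleanly onto $\psi_\# T\rstr(N_\infty\setminus B(p_0,H_j))$. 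The hypothesis $A_0\subset M\setminus\partial M$ makes $p_0$ interior and, since $F_j$ is the identity near $\partial M$, ensures $\partial D_j=0$ so that no boundary current enters the filling.

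Finally, for the mass identity when $\mathcal H^3(A_0)=0$, note first that lower semicontinuity of mass under intrinsic flat convergence gives $\mass(N)\le\liminf_j\mass(N_j)$. In this case $\vol(A_{\delta_j})\to 0$, hence $\vol(A'_{\delta_j})\le\vol(A_{\delta_j})(1+\epsilon)\to 0$ by (\ref{sewn-curve-tubular'}), so the direct computation $\mass(N_j)=\vol(M^3)-\vol(A_{\delta_j})+\vol(A'_{\delta_j})\to\vol(M^3)$ evaluates the limit. By (\ref{massofpulledset}) with $\|T\|(A_0)=\mathcal H^3(A_0)=0$ the right-hand side equals $\mass(\psi_\# T)=\mass(N)$, and by \lemref{pulled-subset-2} the mass measure of $N_\infty$ is its Hausdorff measure, so $\mass(N)=\mathcal H^3(N)$; this is (\ref{L:massconv}). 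For $\mathcal H^3(A_0)>0$ this convergence must fail, since the surgery region keeps volume $\approx\vol(A_0)$ that is lost to the limit, which is exactly why the mass statement is conditioned on $A_0$ having measure zero.
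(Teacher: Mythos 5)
Your overall route is the paper's route: Gromov--Hausdorff subconvergence to a flat limit sitting inside $N_\infty$ via \cite{SW-JDG}, then the filling estimate of \cite{Sormani-AA} applied through the surjective $4$-Lipschitz $H_j$-almost isometries $F_j$ of Lemmas~\ref{L:surjectivemaps}--\ref{L:almostiso}. The cylinder-plus-cone filling you sketch, with the discrepancy current confined to $B(p_0,H_j)$ and killed by the cone inequality at cost $\lesssim H_j\,\vol(A_{H_j})$, is exactly the mechanism the paper defers to the citation of \cite{BDS} (where it is carried out for $A_0$ a curve), and your mass argument for (\ref{L:massconv}) via lower semicontinuity plus the direct volume computation matches the intended one.

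However, you omit the one step the paper singles out as the \emph{only} new content beyond the curve case: verifying that $N=\bar N$, i.e.\ that the pulled point $p_0$ has positive lower density with respect to the limit mass measure. An intrinsic flat limit is by definition the \emph{settled} completion --- the set of positive-density points of $\|\psi_\# T\|$ --- so establishing $d_{\mathcal F}(N_j,\cdot)\to 0$ with limit current $\psi_\# T$ only yields $N_j\Fto N$ with $\bar N$ isometric to $N_\infty$; whether the limit space is $N_\infty$ or $N_\infty\setminus\{p_0\}$ depends on whether
\[
\liminf_{r\to 0}\frac{\vol_{N_\infty}(B(p_0,r))}{r^3}
=\liminf_{r\to 0}\frac{\vol_{M}(T_r(A_0)\setminus A_0)}{r^3}
\]
is positive, and your filling argument cannot decide this (the flat distance does not distinguish the two candidate spaces). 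The paper closes this by using that $A_0$ is an embedded compact submanifold of dimension $d\in\{1,2,3\}$, so $\vol_M(T_r(A_0))\sim c\,r^{3-d}\Hm^d(A_0)$ with $\Hm^d(A_0)>0$, making the ratio blow up; note this is precisely where the hypothesis that $A_0$ is a submanifold (rather than an arbitrary compact set) is used, another point your write-up never touches. Add this density computation and your argument aligns with the paper's.
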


This was proven in \cite{BDS} in the case when $A_{0}$ was a curve. The only fact that needs to be checked in the present case is that $N=\bar{N}$, or that $p_{0}$ has positive density. 

Since 
\be
\liminf_{r\to 0} \frac{\vol_{N_\infty}(B(p_0,r))}{r^3}
=\liminf_{r\to 0} \frac{\vol_{M}(T_r(A_0)\setminus A_0)}{r^3}
\ee
Thus $N$ is isometric to $N_\infty$ when this
liminf is positive and $N$ is isometric to $N_\infty\setminus \{p_0\}$
when this liminf is $0$. Since $A_{0}$ is a compact, embedded submanifold of dimension $d=1,2,3$ in a 3 dimensional Riemannian manifold we have
\be
 \liminf_{r\to 0} \frac{\vol_{M}(T_r(A_0)\setminus A_0)}{r^3}
 = \liminf_{r\to 0} \frac{\omega_{d} r^{3-d} \cdot \Hm^{d}(A_{0})}{r^3} = + \infty >0,
\ee
since $\Hm^{d}(A_{0})$ is never zero. Thus $N$ is isometric to $N_\infty$.

Applying this final lemma the proof of Theorem~\ref{thm-seq-scrunch}
is complete.
\end{proof}

\subsection{Remarks on Convergence to Pulled Limit Spaces}

Sequences of Riemannian manifolds with sectional curvature
uniformly bounded below converge in the Gromov-Hausdorff and Intrinsic Flat
sense to Alexandrov spaces with curvature bounded below 
(cf. \cite{BBI}).  It can be seen  that a sequence of $N_j$ as in 
Definition~\ref{def-seq-down} that also
have a uniform lower bound on sectional cannot exist because
if they did they would have a limit which is a pulled space that
fails to have Alexandrov curvature bounded below. 

Noncollapsing sequences of Riemannian manifolds with Ricci curvature
uniformly bounded below converge in the Gromov-Hausdorff and Intrinsic Flat
sense to spaces whose Hausdorff measure satisfies the appropriate
Bishop-Gromov Volume Comparison Theorem. 
It can be shown that that a noncollapsing 
sequence of $N_j$ as in Definition~\ref{def-seq-down} that also
have a uniform lower bound on Ricci curvature cannot exist because
if they did they would have a limit which is a pulled space 
whose Hausdorff measure fails to
satisfy the Bishop-Gromov Volume Comparison Theorem. 
 
However we can
construct sequences of manifolds with positive scalar curvature
that converge to a wide variety of interesting pulled limit spaces.

\section{Method I: Sewing a Fixed Manifold}\label{S:method1-fxdmfld}

This first method takes a fixed Riemannian manifold, $M^3$, 
with positive scalar curvature that has a region with constant positive
sectional curvature $K$ about compact set $A_{0}$.   We then construct a
sequence of Riemannian manifolds, $N_j^3$, that also have positive scalar curvature that converge to a pulled space constructed from $M^3$
by pulling the compact set $A_{0}$ to a point.   The process which we call ``sewing a region" involves constructing many tunnels between
many points in the region.

We break this section into three parts: first we describe a typical tunnel,
then we describe how to glue a whole collection of tiny tunnels
into a manifold, we do this in a very specific way to sew the region,
and finally we prove convergence of the sequence to the pulled limit.  

\subsection{Tunnels with Positive Scalar Curvature}\label{sect-tunnel-back}


Using different techniques, Gromov-Lawson and Schoen-Yau described how to
construct tunnels diffeomorphic to ${\mathbb{S}}^2 \times [0,1]$
with metric tensors of positive scalar curvature that can be glued smoothly
into three dimensional spheres of constant sectional curvature \cite{Gromov-Lawson-tunnels}\cite{Schoen-Yau-tunnels}.  One may imagine constructing such a tunnel by taking the Schwarzschild
Riemannian manifold, dropping a tangent sphere from above and raising one
from below, and then smoothing in a way such that the scalar curvature becomes positive.   See Figure~\ref{F:tunnels}. 
These tunnels are the first crucial piece for our construction.

\begin{figure}[h] 
\begin{center}
	\includegraphics[width=2in]{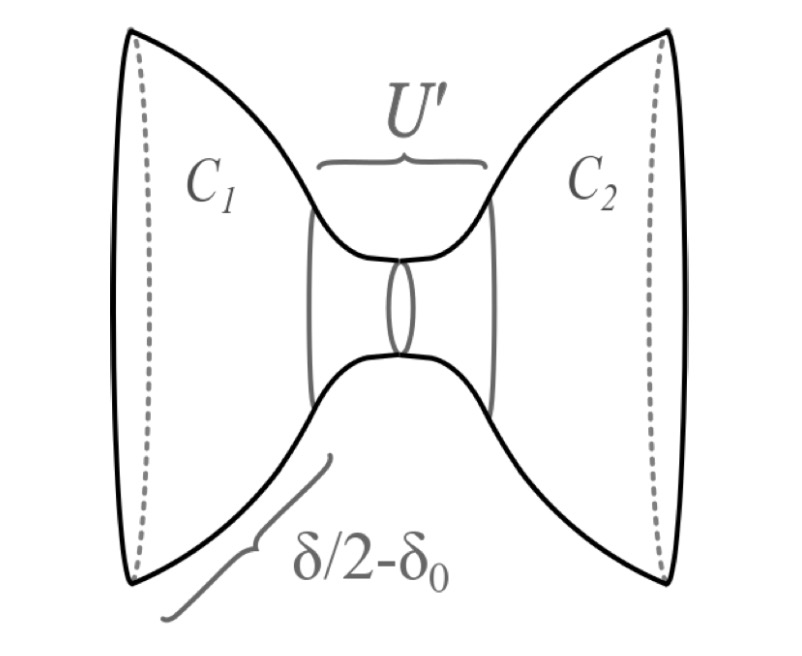}
\end{center}
	\caption{The tunnel $U$.}\label{F:tunnels}
\end{figure}

These tunnels can be made long or short, or arbitrarily tiny.  Tiny ones are rigorously constructed by the first author with J. Dodziuk in the appendix to \cite{BDS}.  In their work, the tunnels were used to bridge between two regions 
within a single manifold where the regions are isometric
to convex balls in round three spheres.  To be more precise, they have proven Lemma~\ref{tunnellemma} which we restate below.  Note also Remark~\ref{min-scal} following the statement.

\begin{lem}\label{tunnellemma}
	Let $0<\delta/2 < 1$. 
	Given a complete Riemannian manifold, $M^3$, 
	that contains two balls $B(p_i,\delta/2)\subset M^3$, $i=1,2$, with constant positive sectional curvature $K \in (0,1]$ on the balls, 
	and given any $\epsilon>0$, there exists a $\delta_0>0$ sufficiently small so that we may create a new
	complete Riemannian manifold, $N^3$, 
	in which we remove two balls and glue in a cylindrical region, $U$, between them:
	\be\label{TL-sewnN}
	N^3=M^3 \setminus \left(B(p_1,\delta/2)\cup B(p_2,\delta/2)\right) \disjointunion U
	\ee
	where $U=U(\delta_0)$ has a metric of positive scalar curvature with
	\be\label{TL-diameterU}
	\diam(U) \le h=h(\delta), 
	\ee
	where
	\be\label{TL-tunnellengthtozero}
	\lim_{\delta\to 0} h(\delta)=0 
	\textrm{ uniformly for } K\in (0,1].
	\ee
	The collars $C_i= B(p_i,\delta/2) \setminus B(p_i,\delta_0)$ identified with subsets of
	$N^3$ have the original metric of constant curvature and the tunnel $U'=U\setminus (C_1\cup C_2)$ has arbitrarily small diameter $O(\delta_0)$ and volume $O(\delta_0^3)$. 
	Therefore with appropriate choice of $\delta_0$, we have
	\be\label{TL-volumeestU}
	(1-\epsilon) 2\vol(B(p,\delta/2)) \le \vol(U) \le (1+\epsilon) 2\vol(B(p,\delta/2))
	\ee
	and
	\be\label{TL-volumeestN}
	(1-\epsilon) \vol(M) \le \vol(N) \le (1+\epsilon) \vol(M).
	\ee
\end{lem}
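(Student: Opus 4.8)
The plan is to realize the tunnel by the Gromov--Lawson surgery-on-a-point construction \cite{Gromov-Lawson-tunnels} carried out in an explicit rotationally symmetric model, and then to track every length and volume scale so as to extract the uniform bounds. Since $B(p_i,\delta/2)$ has constant curvature $K$, it is isometric to a geodesic ball of radius $\delta/2$ in the round sphere $\mathbb{S}^3_K$, so in geodesic polar coordinates about $p_i$ the metric reads $dr^2+f_K(r)^2\,g_{\mathbb{S}^2}$ with $f_K(r)=\sin(\sqrt{K}\,r)/\sqrt{K}$. The whole construction will leave the \emph{collar} $\delta_0\le r\le \delta/2$ isometric to its original round annulus and only redesign the two inner caps $\{r<\delta_0\}$, joining them through a thin neck; this is exactly what guarantees that $N^3$ is a smooth complete manifold agreeing with $M^3$ off the two balls and that $C_i=B(p_i,\delta/2)\setminus B(p_i,\delta_0)$ keeps constant curvature $K$.

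First I would record the warped-product identity $\Scal = -4 f''/f + 2(1-(f')^2)/f^2$ for $g=dr^2+f(r)^2 g_{\mathbb{S}^2}$. Representing the profile as a unit-speed planar curve with $f'=\sin\phi$ and turning angle $\phi$ (so $f''=\phi'\cos\phi$), this becomes $\Scal=\tfrac{2\cos\phi}{f}\big(\tfrac{\cos\phi}{f}-2\phi'\big)$; hence, while $\cos\phi>0$ and $f>0$, one has $\Scal>0$ \emph{exactly} when the profile's geodesic curvature obeys the Gromov--Lawson bound $\phi'<\cos\phi/(2f)$. Both endpoints of the interpolation satisfy this strictly: the round cap has $\Scal=6K>0$, and the straight cylinder $\mathbb{S}^2(\rho)\times\R$ of small radius $\rho$ has $\Scal=2/\rho^2>0$. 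The construction then bends the round profile: starting at $r=\delta_0$ (where $f_K(\delta_0)\approx \delta_0$ and $\phi\approx\pi/2$) I would turn the curve so that $f$ levels off at a neck radius $\rho\le\delta_0$, producing a cylinder, and then splice the two resulting cylinders (one from each $p_i$) by a straight cylindrical segment to form $U\cong \mathbb{S}^2\times[0,1]$. The turning is concentrated where $f$ is small, precisely where the admissible bound $\cos\phi/(2f)$ is large, so there is ample room to keep $\phi'<\cos\phi/(2f)$; the only delicate matching is near $\phi=\pi/2$, where $\cos\phi$ is small, but there the profile rejoins the round arc, whose signed curvature $-\sqrt{K}$ is automatically admissible, so smoothness and positivity persist.

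The uniformity in $K$ is the heart of the matter and the step I expect to be the main obstacle. Its resolution is that at scale $\delta_0$ the cap is uniformly nearly Euclidean: for $r\le\delta_0$ and $K\in(0,1]$ one has $f_K(r)=r-\tfrac{K}{6}r^3+\cdots$, so $f_K$ and its first two derivatives lie within $O(\delta_0^2)$ of the flat profile $f_0(r)=r$, uniformly in $K$. Consequently a single bending profile, designed once for the flat model at scale $\delta_0$, keeps the Gromov--Lawson inequality strict for all $K\in(0,1]$ simultaneously, with transition region of diameter $O(\delta_0)$ and volume $O(\delta_0^3)$; this is the tunnel $U'=U\setminus(C_1\cup C_2)$. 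Schoen--Yau's conformal method \cite{Schoen-Yau-tunnels}, or the Schwarzschild-neck picture of Figure~\ref{F:tunnels}, would furnish an alternative existence proof, but the explicit profile is what delivers these uniform scale estimates.

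The remaining estimates are then bookkeeping. Since $U$ is the union of the two collars $C_i$ (each of intrinsic diameter $\le\delta$, independent of $K$, because a radius-$\delta/2$ ball has diameter at most $\delta$ for every $K\le 1$) together with $U'$ of diameter $O(\delta_0)<\delta$, one gets $\diam(U)\le h(\delta)$ with $h(\delta)=O(\delta)\to 0$ uniformly in $K$, which is \eqref{TL-diameterU}--\eqref{TL-tunnellengthtozero}. For volume, using $f_K(r)\le r$, $\vol(U)=\vol(C_1)+\vol(C_2)+\vol(U')=2\vol(B(p,\delta/2))-2\vol(B(p,\delta_0))+O(\delta_0^3)=2\vol(B(p,\delta/2))+O(\delta_0^3)$, so for $\delta$ fixed one chooses $\delta_0$ small enough that the error is at most $\epsilon\cdot 2\vol(B(p,\delta/2))$, giving \eqref{TL-volumeestU}; finally $\vol(N)=\vol(M)-2\vol(B(p,\delta/2))+\vol(U)=\vol(M)+O(\delta_0^3)$ yields \eqref{TL-volumeestN} after shrinking $\delta_0$ once more.
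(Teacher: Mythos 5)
The paper does not prove this lemma itself --- it restates it from the appendix of \cite{BDS}, where the tunnels are constructed by exactly the quantitative Gromov--Lawson bending of a rotationally symmetric profile that you describe, with the same collar/neck decomposition, the same positivity criterion $\phi'<\cos\phi/(2f)$, and the same $O(\delta_0)$ diameter and $O(\delta_0^3)$ volume bookkeeping. Your proposal is correct and takes essentially the same approach as that source.
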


\begin{rmrk} 
After inserting the tunnel,
$\partial B(p_1,\delta/2)$ and $\partial B(p_2,\delta/2)$ are arbitrarily close together because of (\ref{TL-tunnellengthtozero}).
\end{rmrk}

\begin{rmrk}  \label{min-scal}
We note that since the scalar curvature inside the tunnel is positive we have
the following fact applied by the authors and Dodziuk in \cite{BDS}:  
\be
\Scal_p >0 \,\,\forall p \in M \quad \implies \quad
\Scal_p >0 \,\,\forall p \in N.
\ee
  In this paper we will also apply the fact:
\be
\Scal_p \ge -\varepsilon \,\,\forall p \in M \quad \implies \quad
\Scal_p \ge -\varepsilon \,\,\forall p \in N.
\ee
\end{rmrk}

\subsection{Gluing tunnels into a Fixed Manifold}

In \cite{BDS}, the authors and Dodziuk described a process of altering
a manifold with positive scalar curvature, $M$, to
build a {\em sewn manifold}, $N$.  This process called {\em sewing along a curve}, $C:[0,1] \to M$, involved cutting out a sequence of balls about 
carefully chosen sequential
points, $C(t_i)$, along the curve and replacing them with tunnels running
from the sphere about $C(t_{2i})$ to the sphere about $C(t_{2i+1})$.
The first step in the construction was the following proposition which
we will apply again in this paper to a completely different collection of
points to create our new method of construction:

\begin{prop}\label{prop-glue}  
Given a complete Riemannian manifold, $M^3$, and $A_{0} \subset M^3$ a compact subset with an even number of points $p_{i} \in A_{0}$, $i = 1, \ldots, n$, with pairwise disjoint contractible balls $B(p_i,\delta)$ which have constant positive sectional curvature $K$, for some $\delta>0$, define $A_{\delta} = T_{\delta}(A_{0})$ and
\be\label{prop-glue-defA'}
	A_{\delta}' = A_{\delta} \setminus \left( \bigcup_{i=1}^n B(p_i,\delta/2) \right) 	
		\disjointunion \bigcup_{i=1}^{n/2} U_i
\ee
where $U_i$ are the tunnels as in Lemma~\ref{tunnellemma} connecting $\partial B(p_{2j+1},\delta/2)$ to $\partial B(p_{2j+2},\delta/2)$ for $j=0,1,\ldots,n/2-1$. 
Then
given any $\epsilon>0$, shrinking $\delta$ further, if necessary, we may create a new complete Riemannian manifold, $N^3$, 
\be\label{E:prop-glue}
	N^3 = (M^3 \setminus A_{\delta}) \disjointunion A_{\delta}'
\ee
satisfying
\be\label{prop-glue-volA'}
(1-\epsilon)\vol(A_{\delta}) \le \vol(A_{\delta}')\le \vol(A_{\delta})(1+\epsilon)
\ee
and
\be\label{prop-glue-vol-space}
(1-\epsilon)\vol(M^3)\le \vol(N^3) \le \vol(M^3) (1+\epsilon).
\ee

If, in addition, $M^3$ has non-negative or positive scalar curvature, then so does $N^3$.
In fact,
\be \label{inf-scal1}
\inf_{x\in M^3} \Scal_x \ge \min \left\{0, \inf_{x\in N^3} \Scal_x\right\}
\ee
If $\partial M^3 \neq \emptyset$, the balls avoid the boundary
and $\partial M^3$ is isometric to $\partial N^3$.
\end{prop}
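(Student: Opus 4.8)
\emph{The plan is} to obtain $N^3$ by carrying out the $n/2$ tunnel surgeries of Lemma~\ref{tunnellemma} simultaneously, one for each consecutive pair $(p_{2j+1},p_{2j+2})$, and then to bookkeep the volume and curvature estimates. Since the balls $B(p_i,\delta)$ are pairwise disjoint and each $p_i\in A_{0}$, we have $B(p_i,\delta/2)\subset B(p_i,\delta)\subset T_\delta(A_0)=A_\delta$, and the surgery regions for distinct pairs are disjoint. Hence each application of the lemma is supported in its own ball-pair, leaves $M^3\setminus A_\delta$ (and all other pairs) untouched, and the $n/2$ applications are genuinely independent. Because $B(p_i,\delta/2)$ is contractible with constant sectional curvature $K$, it is isometric to a geodesic ball in the space form of curvature $K$, which is exactly the hypothesis Lemma~\ref{tunnellemma} requires (possibly after a fixed rescaling of the metric to bring $K$ into $(0,1]$, which only alters volumes by a constant factor). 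Note that each $U_i$ is an \emph{intrinsic} cylinder glued in where two balls were removed, so the two spheres it joins need not be close in $M^3$; the small diameter of $U_i$ is what makes them close in $N^3$.

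\emph{First} I would fix $\epsilon>0$ and apply Lemma~\ref{tunnellemma} to each pair with this $\epsilon$, obtaining for each $j$ an internal threshold $\delta_0^{(j)}$. Since there are only finitely many pairs, I take $\delta$ small and each neck parameter below $\min_j \delta_0^{(j)}$, using the uniformity of $h(\delta)\to 0$ over $K\in(0,1]$ in~(\ref{TL-tunnellengthtozero}) so that a single choice of $\delta$ yields the estimates for all tunnels at once. Gluing the tunnels $U_i$ into the slots left by the removed balls $B(p_i,\delta/2)$ produces $A_\delta'$ as in~(\ref{prop-glue-defA'}) and hence the smooth manifold $N^3=(M^3\setminus A_\delta)\disjointunion A_\delta'$ of~(\ref{E:prop-glue}); smoothness at each gluing is guaranteed because the collars retain their original constant-curvature metric.

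\emph{For the volume estimates} I would add up the per-tunnel bounds. Writing $\vol(A_\delta')=\vol(A_\delta)-\sum_{i=1}^{n}\vol(B(p_i,\delta/2))+\sum_{j=1}^{n/2}\vol(U_j)$ and applying~(\ref{TL-volumeestU}) to each $U_j$ gives $\bigl|\sum_j\vol(U_j)-\sum_i\vol(B(p_i,\delta/2))\bigr|\le \epsilon\sum_i\vol(B(p_i,\delta/2))\le\epsilon\,\vol(A_\delta)$, which is precisely~(\ref{prop-glue-volA'}). Since $N^3$ agrees with $M^3$ off $A_\delta$, we have $\vol(N^3)=\vol(M^3)-\vol(A_\delta)+\vol(A_\delta')$, and~(\ref{prop-glue-volA'}) then yields~(\ref{prop-glue-vol-space}). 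The scalar-curvature assertion~(\ref{inf-scal1}) follows from Remark~\ref{min-scal}: away from the tunnels $N^3$ is isometric to $M^3$ (the collars keep curvature $K>0$, hence positive scalar curvature), while inside each tunnel the scalar curvature is positive, so the infimum of the scalar curvature is not decreased below $\min\{0,\inf_{M^3}\Scal\}$. Finally, since the balls avoid $\partial M^3$ the surgery is supported in the interior, $N^3$ coincides with $M^3$ near the boundary, and $\partial N^3$ is isometric to $\partial M^3$.

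\emph{The step I expect to be the main} (though modest) obstacle is the \emph{synchronization} required to run all surgeries with a single $\delta$: one must verify that the finitely many thresholds $\delta_0^{(j)}$ can be met at once and that the individual volume errors accumulate to at most $\epsilon\,\vol(A_\delta)$. This is exactly where the uniform-in-$K$ statement~(\ref{TL-tunnellengthtozero}) of Lemma~\ref{tunnellemma} is essential, together with the pairwise disjointness of the balls, which keeps the surgeries from interacting and makes the additivity of volumes transparent.
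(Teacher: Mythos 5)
Your proposal is correct. Note that the paper itself gives no proof of Proposition~\ref{prop-glue} --- it is imported verbatim from the earlier work \cite{BDS} and only restated here --- so there is no in-paper argument to compare against; but your reconstruction (run the $n/2$ disjointly supported surgeries of Lemma~\ref{tunnellemma} with a single $\delta$ below all finitely many thresholds, sum the per-tunnel volume errors using that the disjoint balls $B(p_i,\delta/2)$ lie in $A_\delta$, and invoke Remark~\ref{min-scal} for the scalar curvature and the interior support of the surgery for the boundary claim) is exactly the intended argument and all steps check out.
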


\bd 
We say that we have glued the manifold to itself with a tunnel between the collection of pairs of sphere  $\partial B(p_i,\delta)$ to $\partial B(p_{i+1},\delta)$
for $i=1$ to $n-1$.   
\ed

\subsection{Sewing Compact Sets}

Here we introduce the notion of sewing a compact set in a manifold.  This is very different from the notion of sewing along a curve that was introduced by the
authors with Dodziuk in \cite{BDS}.  In both one constructs a collection of tunnels in the space using Proposition~\ref{prop-glue}, however, when one sews along a curve the balls are simply lined up along the curve.  To sew a region the picture is much more complicated.

The goal is to scrunch the region to a point as in Definition~\ref{def-seq-down}.
So we need to use the tunnels to bring every point in the region close to any other
point in the region.   Before stating and proving our proposition we
describe the key idea with a figure.

We start with a compact subset $A_{0}$ of $M^{3}$ with a tubular neighborhood that is isometric to a compact subset of a sphere with constant sectional curvature. 
We cover the tubular neighborhood 
with $n$ balls of small equal radius, and consider the disjoint
collection of balls of radius $r$ about the same points.  Every point in the
tubular neighborhood is close to one of the $r$-balls.

In every one of the $r$-balls we cut out $(n-1)$ tiny disjoint balls of radius $\delta$.  
We then glue tunnels between these tiny $\delta$-balls so that there is a 
tunnel running from any small $r$-ball to any other small $r$-ball
using Proposition~\ref{prop-glue}.
  See Figure~\ref{fig-sewn-region} which has $n=3$ $r$-balls and $n(n-1)=6$ $\delta$-balls and $n(n-1)/2=3$ tunnels.

\begin{figure}[htbp]
\begin{center}
\includegraphics[width=3in]{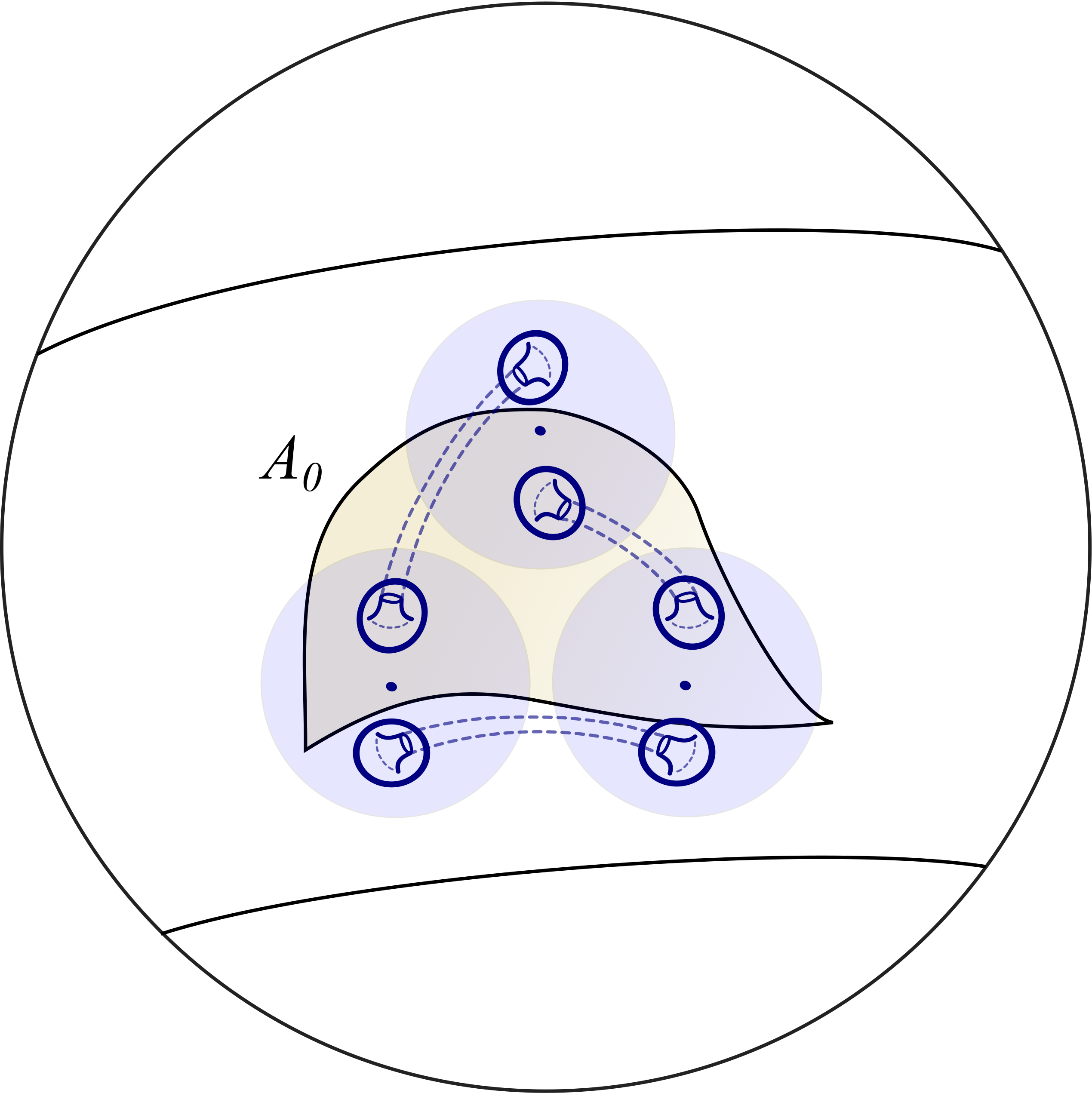}
\caption{Sewing a manifold through six balls along a region.}
\label{fig-sewn-region}
\end{center}
\end{figure}

Once we have done this sewing we will have created a new sewn
manifold $N^3$ with $n(n-1)/2$ tunnels.   One may think 
of this $N^3$ as being a new version of $M^3$ with a collection of star
gates, providing quick service from anywhere in the special region $A_{0}$ 
to anywhere else in the special region.  To draw a minimal path from $p$ to $q$, 
one runs the path through the
tunnel between the $r$-ball closest to $p$ and the $r$-ball closest
to $q$.  Away from the special region $A_{0}$, $N^3$ is isometric to $M^3$
but it is scrunched on that region.

\begin{prop}\label{sewn-region}
Given a complete Riemannian manifold, $M^3$, and 
a compact set $A_{0}\subset M^3$ whose tubular neighborhood,
$A_{a} = T_{a}(A_{0})$, is Riemannian isometric to 
a subset of a sphere of constant sectional curvature.

Let $r\in (0,a)$.  Given $\epsilon>0$, there exists $\delta = \delta(A_{0},K,r,\epsilon)\in (0,r)$ and there exists even $n=\bar{n}(\bar{n}-1)$ depending on $A_{0}, K$, $\epsilon$, and $r$ and points $p_1,...,p_n \in A_{0}$ with $B(p_i,\delta)$
are pairwise disjoint such that we can ``sew the region tightly'' to create a new complete Riemannian manifold $N^3$,
\be
N^3 = (M^3 \setminus A_{r} )\disjointunion A_{r}',
\ee
exactly as in Proposition~\ref{prop-glue}, with
\be
A_{r}'= A_{r} \setminus \left(\bigcup_{i=1}^{n} B(p_i,\delta/2)\right)\disjointunion \bigcup_{i=1}^{n/2} U_i, 
\ee
so that
\be\label{sewn-region-tubular}
\vol(A_{r}')\le \vol(A_{r})(1+\epsilon) 
\ee
and
\be\label{sewn-region-vol}
\vol(N^3) \le \vol(M^3) (1+\epsilon)
\ee
and
\be \label{sewn-region-diam}
\diam(A_{r}')\le H(r,\delta)= 16r+ 3h(\delta). 
\ee

Since $\delta \to 0$ when $r \to 0$, 
\be\label{sewn-region-arbitrarily-tightly}
\lim_{r \to 0} H(r,\delta)=0 \textrm{ uniformly for } K\in (0,1], 
\ee
we say we have sewn the region $A_{0}$ arbitrarily tightly. 

If $M^3$ has non-negative or positive scalar curvature, then so does $N^3$.
In fact,
\be \label{inf-scal-2}
\inf_{x\in M^3} \Scal_x \ge \min \left\{0, \inf_{x\in N^3} \Scal_x\right\}
\ee
If $\partial M^3 \neq \emptyset$, the balls avoid the boundary
and $\partial M^3$ is isometric to $\partial N^3$.   
\end{prop}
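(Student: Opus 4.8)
The plan is to realize $N^3$ as a single application of \propref{prop-glue}, so that the volume and scalar-curvature conclusions are inherited immediately, and to reserve the genuine work for the choice of the points $p_1,\dots,p_n$ and for the diameter estimate \eqref{sewn-region-diam}. First I would fix the scale $r\in(0,a)$ and choose a maximal $2r$-separated set $q_1,\dots,q_{\bar n}\in A_0$; maximality forces the balls $B(q_i,2r)$ to cover $A_0$, while the $2r$-separation makes the balls $B(q_i,r)$ pairwise disjoint. The number $\bar n$ is finite and controlled by the packing number of the compact set $A_0$ at scale $r$ (hence depends only on $A_0,K,r$), and we set $n=\bar n(\bar n-1)$. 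Inside each $B(q_i,r)$ I would then select $(\bar n-1)$ pairwise disjoint balls $B(p_{(i,j)},\delta)$, one for each index $j\neq i$; since the tubular neighborhood $A_a$ is isometric to a piece of a constant-curvature sphere, each such ball automatically has constant sectional curvature $K$, and for $\delta$ small enough (depending on the now-fixed $\bar n$, and on $\epsilon$) the $(\bar n-1)$ of them fit disjointly inside $B(q_i,r)$, with all $n$ of them pairwise disjoint because the $r$-balls are. Finally I would order the list $p_1,\dots,p_n$ so that the consecutive pairing $(p_{2j+1},p_{2j+2})$ used in \propref{prop-glue} runs through exactly the $\binom{\bar n}{2}=n/2$ matched pairs $\{p_{(i,j)},p_{(j,i)}\}$; this places one tunnel between $B(q_i,r)$ and $B(q_j,r)$ for every unordered pair $\{i,j\}$, i.e.\ a copy of the complete graph $K_{\bar n}$ on the $r$-balls.

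With the points so chosen, \propref{prop-glue} produces the manifold $N^3=(M^3\setminus A_r)\disjointunion A_r'$ and delivers \eqref{sewn-region-tubular}, \eqref{sewn-region-vol}, the scalar-curvature inequality \eqref{inf-scal-2} (which is exactly \eqref{inf-scal1} for this configuration), and the boundary statement, all with no further work beyond possibly shrinking $\delta$ so as to absorb the fixed number $n$ of tunnel-volume errors coming from \lemref{tunnellemma} into $\epsilon$.

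The main step, and the one genuinely new relative to \cite{BDS}, is the diameter bound \eqref{sewn-region-diam}. Given $p,q\in A_r'$, each lies within $r$ of $A_0$ and hence within $3r$ of some net point ($r$ into $A_0$, then at most $2r$ to a $q_i$). I would build an explicit path: run from $p$ to the mouth of the ball $B(p_{(i,j)},\delta/2)$ sitting inside $p$'s $r$-ball (cost $O(r)$, since that center lies within $r$ of $q_i$ and $\delta<r$), cross the single tunnel joining it to $B(p_{(j,i)},\delta/2)$ in $q$'s $r$-ball (cost $\le\diam(U)\le h(\delta)$ by \eqref{TL-diameterU}), then run out to $q$ (again cost $O(r)$). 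Because every pair of $r$-balls is joined directly, one tunnel hop always suffices, and summing these ball-traversal and tunnel-with-collar contributions, each $O(r)$ or bounded by $h(\delta)$, yields the stated $\diam(A_r')\le 16r+3h(\delta)$. The delicate point here is that these lengths must be realized by honest paths in the \emph{glued} manifold $N^3$ rather than in $M^3$: one must route through the isometric collars $C_i$ to reach the tunnel and argue that detours around the other removed $\delta$-balls cost no more than the stated multiple of $r$. Since inserting a tunnel can only shorten distances, only the upper bound requires this care, so I expect the path-realization bookkeeping in $N^3$ to be the principal obstacle.

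Finally, since fixing $r$ forces a definite $\bar n$ and then a definite small $\delta=\delta(A_0,K,r,\epsilon)$ with $\delta\to 0$ as $r\to 0$, the uniform limit \eqref{TL-tunnellengthtozero} for $h$ gives $H(r,\delta)=16r+3h(\delta)\to 0$ uniformly in $K\in(0,1]$, which is \eqref{sewn-region-arbitrarily-tightly}, completing the argument.
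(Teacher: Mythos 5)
Your proposal follows essentially the same route as the paper: a maximal packing of $A_0$ by disjoint $r$-balls (equivalently your $2r$-separated net), $\bar n-1$ disjoint $\delta$-balls in each one paired so the tunnels realize the complete graph $K_{\bar n}$, volumes and scalar curvature inherited from Proposition~\ref{prop-glue}, and a one-tunnel-hop path for the diameter. The ``path-realization bookkeeping'' you flag as the remaining obstacle is precisely the case analysis the paper carries out (whether the geodesic from $x$ to $A_0$ meets a removed $\delta$-ball, and whether $x$ lies inside a tunnel), yielding the same $16r+3h(\delta)$ bound.
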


\begin{proof}
Fix $r<a$ as in the proposition statement.
For simplicity of notation, let $A=A_{r}$ and $A'=A_{r}'$. 

By the compactness of $A_{0}$ there exists a finite $\bar{n}=\bar{n}(A_{0},r)$ equal to the maximal number of pairwise disjoint balls $\{B(v_k,r)\}_{k=1}^{\bar{n}}$, centered at $v_k \in A_{0}$ of radius $r>0$.   Note that $B(v_k,r) \subset A$. 

Let $\delta=\delta(\bar{n},K,r)>0$ be chosen small enough so that for each $k=1$ to $\bar{n}$, there are $\bar{n}-1$ pairwise disjoint balls of radius $\delta$ centered at $v_{kj}$ with $k\neq j$ such that 
\be
B(v_{kj},\delta)\subset B(v_k,r) \textrm{ for } j\in \{1,...,\bar{n}\}\setminus \{k\}
\ee
and each 
\be\label{vkj-edge}
v_{kj} \in \partial B(v_k,r-\delta).   
\ee
Let
\be
	n=\bar{n}(\bar{n}-1).
\ee
We choose the points $p_i\in A_{0}$ such that
\be
\{p_1,....,p_n\}= \{v_{kj}:\,\, k,j\in \{1, \ldots, \bar{n}\},\, k\neq j\}
\ee
so that $B(p_i,\delta)$ are disjoint balls centered in $A_{0}$.
In fact we choose $p_i$ so that when $i$ is even we have both
\be
p_{i} \in \{v_{kj}:\,\, k<j\} 
\ee
and
\be\label{switch}
p_{i+1}=v_{jk} \textrm { iff } p_{i}=v_{kj}
\ee
and we set
\be
U_{kj}=U_{j,k}=U_i.
\ee

We now apply Proposition~\ref{prop-glue} with
\begin{eqnarray}
A'&=& A \setminus \left(\bigcup_{i=1}^{n} B(p_i,\delta/2)\right)\disjointunion \bigcup_{i=1}^{n/2} U_i\\
&=&A \setminus \left(\bigcup_{k\neq j} B(v_{kj},\delta/2)\right)\disjointunion \bigcup_{k<j} U_{kj}
\end{eqnarray}
to conclude the volume estimates (\ref{sewn-region-tubular}) and (\ref{sewn-region-vol}). 

We next verify the diameter estimate of $A'$, (\ref{sewn-region-diam}).
To do this we define sets $C_k \subset A'$ which correspond to the
sets $\partial B(v_k,r)\subset A$ which are unchanged because the 
$B(p_i,\delta/2)\cap \partial B(v_k,r)=\emptyset$.   
We also define sets $C_{kj} \subset A'$ which correspond to the sets $\partial B(v_{kj},\delta/2)\subset A$ which are unchanged because they
are the edges of the edited regions:  
\be
C_{kj}\cup C_{jk} =\partial U_{jk}. 
\ee
Let
\be
U=\bigcup_{k<j} U_{kj}.
\ee

Let $x$ and $y$ be arbitrary points in $A'$.   We first claim there exists
$k,j$ such that
\be\label{a+4r+h}
d_{A'}(x, C_k)< 4r+h(\delta)
\textrm{ and } d_{A'}(y, C_j) < 4r+h(\delta).
\ee
By symmetry we need only prove this for $x$.   Note that in Case I where
\be
x \in A'\setminus \bigcup_{k<j} U_{kj} = A \setminus \left(\bigcup_{k\neq j} B(v_{kj},\delta/2)\right)
\ee
then we can view $x$ as a point in $A$.   Let $\gamma_1\subset A$ be
the shortest path from $x$ to the closest point $v_x\in A_{0}$, then $L(\gamma_1)<r$.  If
\be\label{E:claimgammamissball1}
\gamma_1 \cap B(v_{kj},\delta/2)\neq \emptyset
\ee
then there exists $k$ such that
\be\label{E:claimdistlessthana}
d_{A'\setminus U}(x, C_{kj}) <\delta
\ee
and we have
\begin{eqnarray}
d_{A'\setminus U}(x,C_k) &\le& d_{A'\setminus U}(x,C_{kj}) 
	+ \diam(C_{kj}) + d_{A'\setminus U}(C_{kj},C_k) \\
	&& \qquad \textrm{(by the triangle inequality)}\\
	&<& r + \diam(C_{kj}) + \delta/2
	 	\qquad \textrm{(by (\ref{E:claimdistlessthana}) and (\ref{vkj-edge}))}\\
	&\le& r + \pi(\delta/2) + \delta/2 
		\qquad \textrm{(by sec $\equiv K >0$ on $B(v_{kj},\delta)$)}\\
	&<& r+3\delta,
	\end{eqnarray}
so that (\ref{a+4r+h}) holds.   Otherwise, still in Case I, if (\ref{E:claimgammamissball1}) fails, $\gamma_1\subset A'\setminus U$ and $\gamma_1(1)=v_x \in A_{0}$.   Let $\gamma_2\subset A$ be the shortest path from $v_x$ to the nearest $\partial B(v_{k'},r)$.   Then $L(\gamma_2)< r$ because this
was a maximal collection of disjoint balls.   If 
\be\label{E:claimgammamissball2}
\gamma_2 \cap B(v_{kj},\delta/2)\neq \emptyset
\ee
then there exists $k$ such that
\be\label{E:claimdistlessthanr}
d_{A'\setminus U}(v_x, C_{kj}) <r
\ee
and we have
\begin{eqnarray}
d_{A'\setminus U}(v_x,C_k) &\le& d_{A'\setminus U}(v_x,C_{kj}) 
	+ \diam(C_{kj}) + d_{A'\setminus U}(C_{kj},C_k) \\
	&& \qquad \textrm{(by the triangle inequality)}\\
	&<& (r-\delta/2) + \diam(C_{kj}) + \delta/2 
	 	\qquad \textrm{(by (\ref{E:claimdistlessthanr}) and (\ref{vkj-edge}))}\\
	&\le& (r-\delta/2) + \pi(\delta/2) + \delta/2 \\
	&&	\qquad \textrm{(by sec $\equiv K >0$ on $B(v_{kj},\delta)$)}\\
	&<& r+2\delta
\end{eqnarray}
so, because $\delta<r$,
\be
d_{A'\setminus U}(x, C_k) < L(\gamma_1)+3r<4r
\ee
and we have (\ref{a+4r+h}).   Otherwise, still in Case I but when (\ref{E:claimgammamissball2}) fails, 
\be
d_{A'\setminus U}(x, C_{k'}) < L(\gamma_1)+L(\gamma_2)<2r.
\ee
Alternatively, we have Case II where
\be
x\in \bigcup_{k<j} U_{kj}.
\ee
In this case, there exists a $k$ such that $x\in U_{kj}$ and
\be
d_{A'}(x, C_k)\le \diam(U_{kj}) + \dist_{A'}(C_{kj},C_k) \le h(\delta)+\delta/2
	< h(\delta)+r.
\ee
Thus we have the claim in (\ref{a+4r+h}).

We now proceed to prove (\ref{sewn-region-diam}) by estimating $d_{A'}(x,y)$ for $x,y \in A'$.  If $j=k$ in (\ref{a+4r+h}), then $d_{A'}(x,y) \le 2(4r+h(\delta))$.   Otherwise,
\be\label{E:diamalmostdone1}
d_{A'}(x,y) \le 2(4r+h(\delta)) + \sup \{ d_{A'}(z,w) \mid z 
\in C_k, w \in C_j \} 
\ee
and
\begin{eqnarray}
\sup \{ d_{A'}(z,w) \mid z \in C_k, w \in C_j \} &\le&
	\diam_{C_k}(C_k) + \dist(C_k,C_{kj}) + \diam(U_{kj}) \notag\\
&&\qquad	+ \dist(C_{jk},C_j)+\diam(C_j) \\
&\le& \pi r + \delta/2 + h(\delta) + \delta/2 + \pi r \\
&\le& 8r+h(\delta).\label{E:diamalmostdone2}
\end{eqnarray}
Thus, by (\ref{E:diamalmostdone1}) and (\ref{E:diamalmostdone2}) we have 
\be
d_{A'}(x,y) \le 8r+2h(\delta)+8r+h(\delta) \le 16r+ 3h(\delta),
\ee
which is the desired diameter estimate (\ref{sewn-region-diam}).

We observe that by our choice of $\delta$ satisfying $\delta<r$ and the fact that $h(\delta) = O(\delta)$ from Lemma~\ref{tunnellemma} we have that (\ref{sewn-region-arbitrarily-tightly}) holds. 

Finally, observe that (\ref{inf-scal-2}) follows since Lemma~\ref{tunnellemma} shows that the tunnels $U_{i}$ have positive scalar curvature. 
\end{proof}

\subsection{Sewing a Fixed Manifold to a Pulled Limit Space}

We may now describe our first technique for creating a sequence of
manifolds with positive scalar curvature which converge to a pulled
metric space.  We will start with a fixed Riemannian manifold $M^3$ 
and create an increasingly tightly sewn sequence of manifolds using
increasingly dense collections of increasingly tiny balls pairwise connected by
increasingly tiny tunnels as in Definition~\ref{def-seq}.   We then prove
this sequence converges to a pulled metric space in Theorem~\ref{thm-seq-sewn}.

\begin{defn}\label{def-seq}
Given a single compact Riemannian manifold, $M^3$, with a compact set, 
$A_0\subset M$, with a tubular neighborhood
$A=T_a(A_0)$ which is Riemannian isometric to a tubular neighborhood of a compact set $V \subset \mathbb{S}^3_K$, in a 
standard sphere of constant sectional curvature $K$,
satisfying the hypothesis of Proposition~\ref{sewn-region}.
We can construct its sequence of increasingly tightly sewn
manifolds, $N_j^3$, by applying Proposition~\ref{sewn-region} taking 
$\epsilon=\epsilon_j \to 0$, $n=n_j \to \infty$, and $\delta=\delta_j\to 0$ to create each sewn manifold, $N^3=N_j^3$ and the edited regions $A_{\delta}'=A_{\delta_{j}}'$ which we simply denote by $A_{j}'$.   
Since these sequences $N_j^3$ are created using Proposition~\ref{sewn-region}, they have nonnegative (resp. positive) scalar curvature whenever $M^3$ has nonnegative (resp. positive) scalar curvature
and $\partial N_j^3=\partial M^3$ whenever $M^3$ has a
nonempty boundary.
\end{defn}

Note that by Proposition~\ref{sewn-region}, the $N_j^{3}$ in this sequence are scrunching the compact set
$A_0$ to a point as in Definition~\ref{def-seq-down}.
Thus our new Theorem~\ref{thm-seq-scrunch} immediately implies the following theorem: 

\begin{thm}\label{thm-seq-sewn}
The sequence $N_j^3$, as in Definition~\ref{def-seq} assuming $M^{3}$ is compactand  $A_{0}$ is a compact, embedded submanifold of dimension 1 to 3, converges in the Gromov-Hausdorff sense 
\be
N_j^{3} \GHto N_\infty,
\ee
and the intrinsic flat sense
\be
N_j^{3} \Fto N_\infty,
\ee
where $N_\infty$ is the metric space created by pulling the
region $A_0 \subset M$ to a point as in Lemmas~\ref{pulled-subset-1}-~\ref{pulled-subset-2}.

If, in addition, $\Hm^{3}(A_{0})=0$ then we also have convergence in the metric measure sense
\be
N_j^{3} \mGHto N_\infty.
\ee
\end{thm}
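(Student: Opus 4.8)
The plan is to recognize \thmref{thm-seq-sewn} as an immediate consequence of \thmref{thm-seq-scrunch}: the only thing to do is to check that the increasingly tightly sewn sequence $N_j^3$ produced in \defref{def-seq} is literally a scrunching sequence in the sense of \defref{def-seq-down}. Once this identification is made, all three modes of convergence — Gromov-Hausdorff, intrinsic flat, and (under $\Hm^3(A_0)=0$) metric measure — are handed to us by \thmref{thm-seq-scrunch} with the same limit space $N_\infty$, since both theorems define $N_\infty$ by pulling $A_0$ to a point as in \lemref{pulled-subset-1}--\lemref{pulled-subset-2}.

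To carry out the verification I would first reconcile the notation. The scrunching parameter $\delta$ in \defref{def-seq-down}, which sets the tubular radius $A_\delta = T_\delta(A_0)$ of the edited region, plays the role of the tubular radius $r$ in \propref{sewn-region} (not the tunnel radius, also called $\delta$ there). With this identification, the three defining conditions of \defref{def-seq-down} are exactly the conclusions of \propref{sewn-region}: the edited-region volume bound $\vol(A_r') \le \vol(A_r)(1+\epsilon)$ is (\ref{sewn-region-tubular}), the ambient volume bound $\vol(N^3) \le \vol(M^3)(1+\epsilon)$ is (\ref{sewn-region-vol}), and the diameter bound $\diam(A_r') \le H$ is (\ref{sewn-region-diam}) with $H = H(r,\delta) = 16r + 3h(\delta)$. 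The remaining technical requirement $2r_j < H_j$ of \defref{def-seq-down} is automatic, since $H_j = 16 r_j + 3 h(\delta_j) > 2 r_j$.

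Next I would attend to the parameter bookkeeping, which is the one place where care is needed. One fixes a sequence $\epsilon_j \to 0$ together with a sequence of tubular radii $r_j \to 0$. Shrinking $r_j$ forces the tunnel radius $\delta_j = \delta(A_0, K, r_j, \epsilon_j)$ of \propref{sewn-region} to zero as well, and it forces the number of balls $n_j = \bar n_j(\bar n_j - 1)$ to infinity, since the maximal count $\bar n(A_0, r)$ of disjoint $r$-balls centered on $A_0$ blows up as $r \to 0$; this is exactly the data $\epsilon_j \to 0$, $n_j \to \infty$, $\delta_j \to 0$ asked for in \defref{def-seq}. By (\ref{sewn-region-arbitrarily-tightly}) one then has $H_j = 16 r_j + 3 h(\delta_j) \to 0$, uniformly in $K \in (0,1]$, so the sequence $N_j^3$ meets every condition of \defref{def-seq-down}.

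With the identification complete, \thmref{thm-seq-scrunch} applies verbatim under the same standing hypotheses — $M^3$ compact and $A_0$ a compact embedded submanifold of dimension $1$ to $3$ — delivering $N_j^3 \GHto N_\infty$ and $N_j^3 \Fto N_\infty$, and, when $\Hm^3(A_0)=0$, $N_j^3 \mGHto N_\infty$. There is no genuine analytic obstacle at this stage: the substantive geometric content (volume and diameter control of the sewn region, plus the positivity-of-scalar-curvature bookkeeping) lives entirely in \propref{sewn-region}, while the metric-geometric content (the $4$-Lipschitz almost-isometries, mass convergence, and positive density of the pulled point $p_0$) lives entirely in \thmref{thm-seq-scrunch} and the lemmas inside its proof. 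The one thing one must not overlook is the clash of the symbol $\delta$ between the two definitions; getting that mapping right is precisely what makes the hypotheses line up, so I would flag it explicitly rather than leave it implicit.
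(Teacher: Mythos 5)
Your proposal is exactly the paper's own argument: the paper proves Theorem~\ref{thm-seq-sewn} by the single observation that, by Proposition~\ref{sewn-region}, the sequence of Definition~\ref{def-seq} scrunches $A_0$ to a point in the sense of Definition~\ref{def-seq-down}, so Theorem~\ref{thm-seq-scrunch} applies directly. Your explicit reconciliation of the clashing uses of $\delta$ (the tubular radius $r$ versus the tunnel-ball radius) and the check that $2r_j < H_j = 16r_j + 3h(\delta_j)$ is automatic are correct and merely spell out details the paper leaves implicit.
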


A special case of this theorem appeared in \cite{BDS} where the authors proved that a sequence of manifolds sewn increasingly tightly along the image of a curve converged to a pulled string space.   

\section{Sewn Spheres and Limits of Volumes}\label{S:sewn-spheres} 

In this section we apply Method I to a standard sphere $M^{3}=\mathbb{S}^{3}$ of constant curvature one with $A_{0}$ chosen to be either a closed geodesic or an equatorial 2-sphere.

The first example appeared in work of the authors with Dodziuk
proven by sewing along a curve \cite{BDS}.  

\begin{example}\label{sphere-geod} \cite{BDS}
Let $N_j^3$ be the sequence of manifolds with positive scalar curvature
constructed from the standard sphere, $\mathbb{S}^3$, by sewing along a closed geodesic $C:[0,1]\to \mathbb{S}^3$
with $\delta=\delta_j \to 0$.   Then
\be\label{sphere-geod-conv}
N_j^3 \mGHto N_\infty \textrm{ and } N_j^3 \Fto N_{\infty} 
\ee
where $N_\infty$ is the metric  space created by taking the
standard sphere and pulling the geodesic to a point as in Proposition~\ref{pulled-string}.

Moreover, at the pulled point $p_0\in N_{\infty}$ we have $w\Scal(p_{0})=-\infty$ where $w\Scal$ is as in (\ref{E:scalarlimit}), i.e.
\be\label{E:scalarbad} 
w\Scal(p_{0}) = \lim_{r \to 0^{+}} 30\left(\frac{\vol_{\E^{3}}(B(0,r)) - \Hm^{3}(B(p,r))}{r^{2}\cdot \vol_{\E^{3}}(B(0,r))} \right) = - \infty.
\ee
\end{example}

This example is depicted in Figure~\ref{fig-sewn-sphere}.

\begin{figure}[htbp]
\begin{center}
\includegraphics[scale=0.3]{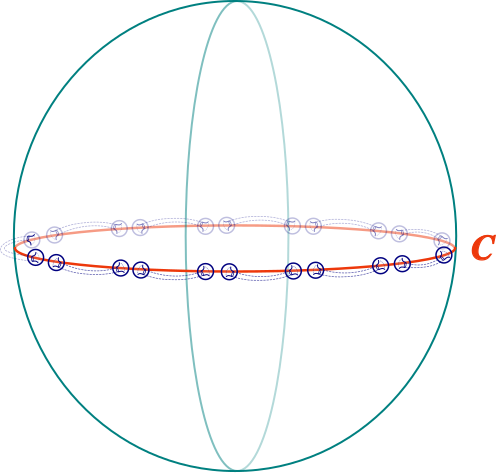}
\caption{Sewing along a closed geodesic in a sphere $\mathbb{S}^{3}$.}
\label{fig-sewn-sphere}
\end{center}
\end{figure}

Similarly, when scrunching a 2-sphere inside $\mathbb{S}^{3}$:

\begin{example}\label{sphere-equator}
Let $N_j^3$ be the sequence of manifolds with positive scalar curvature
constructed from the standard sphere, $\mathbb{S}^3$, by sewing along an equatorial 2-sphere, $A_{0} =\mathbb{S}_e^2 \subset \mathbb{S}^3$, with $\delta=\delta_j \to 0$ as in Proposition~\ref{sewn-region}.   Then
\be\label{sphere-equator-conv}
N_j^3 \mGHto N_\infty \textrm{ and } N_j^3 \Fto N_{\infty} 
\ee
where $N_\infty$ is the integral current space created by taking the
standard sphere and pulling the equatorial sphere to a point as in 
Lemma~\ref{pulled-subset-1}.

Moreover, at the pulled point $p_0\in N_{\infty}$ we have $w\Scal(p_{0})=-\infty$ where $w\Scal$ is as in (\ref{E:scalarlimit}), i.e.
\be\label{E:scalarbad2} 
w\Scal(p_{0}) = \lim_{r \to 0^{+}} 30\left(\frac{\vol_{\E^{3}}(B(0,r)) - \Hm^{3}(B(p,r))}{r^{2}\cdot \vol_{\E^{3}}(B(0,r))} \right) = - \infty.
\ee
\end{example}

\begin{proof}

First, observe that 
\begin{eqnarray}
\vol_{N_{\infty}}(B(p_0,r)) &=& \mathcal{H}_{N_\infty}^3(B(p_0,r)) \\
	&=& \mathcal{H}_{N_\infty}^3(B(p_0,r) \setminus \{p_0\}) \\
	&=& \mathcal{H}_{\mathbb{S}^3}^3(T_r(\mathbb{S}_e^2)).
	\label{ballequalstubularnbhd}
\end{eqnarray}

Since $\mathbb{S}_{e}^{2}$ is a closed equatorial sphere of area $4\pi$
in a three dimensional sphere, we have
\begin{eqnarray}
\lim_{r\to0} \frac{\mathcal{H}_{\mathbb{S}^3}^3(T_{r}(\mathbb{S}_e^2))}{4\pi (2r) } = 1.
\end{eqnarray}
Thus
\be
\lim_{r\to 0} \frac{\vol_{\E^3}(B(0,r)) - \vol_{N_\infty}(B(p_0,r))}{r^2 \vol_{\E^3}(B(0,r))}
=\lim_{r\to 0} \frac{(4/3)\pi r^3 -  4\pi (2 r)}{(4/3)\pi r^5 }
=-\infty
\ee
as claimed.
\end{proof}

\section{Method II: Sewing a Sequence of Manifolds}\label{S:method1-seqmflds}

In order to prove the examples  Section~\ref{S:sewn-admPMT} and our upcoming paper \cite{BS-tori}
we need to develop 
a more general technique than Method I.  Here we start with a 
converging sequence of 
Riemannian manifolds with a Riemannian limit
and sew regions in that sequence to create
a new sequence of 
Riemannian manifolds with a pulled limit.
More precisely, we consider a sequence of smooth Riemanninan manifolds $\{M_{j}^{3}\}_{j \in \N}$ converging in the biLipschitz sense to $M_{\infty}^{3}$ (also a smooth manifold) and, for each manifold $M_{j}^{3}$, construct its sequence of increasingly tightly sewn manifolds $\{N_{j,i}^{3}\}_{i \in \N}$ scrunching a region $A_{j,0}$. Then the following Theorem states, under suitable hypotheses, that a sequence of sewn manifolds $\{N_{j}^{3}\}$ created from $\{N_{j,i}^{3}\}$ converges in the Gromov-Hausdorff and Intrinsic-Flat sense to $N_{\infty}$ which is a pulled space created by scrunching the region $A_{\infty,0} \subset M_{\infty}^{3}$ to a point.

\begin{thm}\label{prop-seq-sewn}
Given a sequence of compact  $M_j^3$
each with a compact region $A_{j,0}\subset M_j^{3}$ with a tubular neighborhood, $A_j$, of constant sectional curvature 
satisfying the hypotheses of Proposition~\ref{sewn-region}.
We assume $M_j^3$  converge in the biLipschitz sense 
to $M_\infty^3$  and the regions $A_{j,0}$ 
converge to compact set 
$A_{\infty,0} \subset M_\infty^3$ in the sense that
there exists biLipschitz maps
\be
\psi_j:M_j^3\to M_\infty^3
\ee
such that 
\be\label{lippsijto1}
L_j=\log\Lip(\psi_j)+\log \Lip(\psi_j^{-1}) \to 0
\ee
and
\be\label{psiA}
\psi_j(A_{j,0})=A_{\infty,0}.
\ee
Then there exists $\delta_j \to 0$ and,
applying Proposition~\ref{sewn-region}
to $M^3=M_j^3$ to sew the regions $A_{0}=A_{j,0}$ with $\delta=\delta_j$, to obtain sewn manifolds $N^3=N_j^3$,
we obtain a sequence $N_j^3$ such that
\be\label{diagGHto}
N_j^{3} \GHto N_\infty
\ee
and
\be
N_j^{3} \Fto N_{\infty,0}
\ee
where $\overline{N}_{\infty,0} = N_\infty$ and $N_\infty$ is the metric space created by taking $M_\infty^3$ and pulling the region $A_{\infty,0}$ to a point as in 
Lemma~\ref{pulled-subset-1}--Lemma~\ref{pulled-subset-2}.  

If, in addition, the regions $A_{j,0}$ satisfy $\mathcal{H}^{3}(A_{j,0})=0$, then 
the sequence $N_j^3$ also converges in the the metric measure sense
\be
N_j^{3} \mGHto N_\infty.
\ee
\end{thm}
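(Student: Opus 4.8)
The plan is to prove the theorem by a diagonalization that factors the passage from $N_j^3$ to $N_\infty$ through an intermediate family of \emph{pulled spaces} $N_{j,\infty}$, separating the sewing limit (controlled for each fixed $j$ by Method~I) from the biLipschitz limit of the ambient manifolds. For each fixed $j$, applying Proposition~\ref{sewn-region} and Theorem~\ref{thm-seq-sewn} to $M_j^3$ with region $A_{j,0}$ and sewing parameter $\delta$ produces sewn manifolds $N_{j,\delta}^3$ that scrunch $A_{j,0}$ to a point in the sense of Definition~\ref{def-seq-down}, and Theorem~\ref{thm-seq-sewn} gives
$$ N_{j,\delta}^3 \GHto N_{j,\infty}, \qquad N_{j,\delta}^3 \Fto N_{j,\infty} \qquad (\delta\to 0),$$
where $N_{j,\infty}$ is $M_j^3$ with $A_{j,0}$ pulled to a point as in Lemmas~\ref{pulled-subset-1}--\ref{pulled-subset-2}. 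Crucially the Gromov--Hausdorff step carries the explicit rate $d_{GH}(N_{j,\delta}^3,N_{j,\infty})\le 2H(r,\delta)$ coming from the diameter estimate (\ref{sewn-region-diam}), and when $\Hm^3(A_{j,0})=0$ one also has $N_{j,\delta}^3\mGHto N_{j,\infty}$.

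The second ingredient is that the biLipschitz convergence $M_j^3\to M_\infty^3$ descends to the pulled spaces. First I would check that $\psi_j$ induces a well-defined bijection $\bar\psi_j:N_{j,\infty}\to N_\infty$ by setting $\bar\psi_j=\psi_j$ off the pulled sets and sending pulled point to pulled point; this is legitimate precisely because $\psi_j(A_{j,0})=A_{\infty,0}$ by (\ref{psiA}). Reading (\ref{lippsijto1}) as the symmetric estimate $e^{-L_j}d_{M_j}\le d_{M_\infty}(\psi_j\cdot,\psi_j\cdot)\le e^{L_j}d_{M_j}$ with $L_j\to 0$, I observe that the pulled distances of Lemma~\ref{pulled-subset-1} are assembled only from infima and sums of ambient distances over the sets $A_{j,0}$ and $A_{\infty,0}$, and that both $\min$ and $+$ respect multiplicative distortion (if each operand lies within a factor $e^{\pm L_j}$, so does the result). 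Hence the same bound passes verbatim to $\bar\psi_j$, which is therefore $e^{L_j}$-biLipschitz. Since $L_j\to 0$ and the diameters of $N_{j,\infty}$ stay uniformly bounded (being biLipschitz images of the compact $M_\infty^3$), the distortion of $\bar\psi_j$ tends to $0$, giving $d_{GH}(N_{j,\infty},N_\infty)\to 0$. For the flat statement I would invoke stability of the intrinsic flat distance under current-preserving biLipschitz maps of distortion tending to $1$ (the pushforward $\bar\psi_{j\#}$ carries the integral current of $N_{j,\infty}$ to that of $N_\infty$ by construction), yielding $d_{\mathcal F}(N_{j,\infty},N_\infty)\to 0$; and when $\Hm^3(A_{j,0})=0$ the mass measure of $N_{j,\infty}$ equals $\Hm^3$ by Lemma~\ref{pulled-subset-2}, so the volume distortion $e^{3L_j}\to 1$ forces weak convergence of the pushed-forward measures, hence $N_{j,\infty}\mGHto N_\infty$.

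With both limits in hand I would close by diagonalizing. For each $j$ choose $\delta_j<1/j$ small enough, as permitted by the first paragraph, that
$$ d_{GH}(N_{j,\delta_j}^3,N_{j,\infty})<\tfrac1j,\qquad d_{\mathcal F}(N_{j,\delta_j}^3,N_{j,\infty})<\tfrac1j,\qquad d_{mm}(N_{j,\delta_j}^3,N_{j,\infty})<\tfrac1j$$
(the last only under the measure-zero hypothesis), and set $N_j^3:=N_{j,\delta_j}^3$, so that $\delta_j\to 0$ as required. The triangle inequality for each distance, combined with the second paragraph, then yields $d_{GH}(N_j^3,N_\infty)\to 0$, $d_{\mathcal F}(N_j^3,N_\infty)\to 0$, and (when $\Hm^3(A_{j,0})=0$) $d_{mm}(N_j^3,N_\infty)\to 0$. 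To justify $\overline N_{\infty,0}=N_\infty$ and identify the flat limit as the full space, I would repeat the density computation of Lemma~\ref{L:flatconv}: since $A_{\infty,0}$ is a biLipschitz image of the $A_{j,0}$ it has positive finite Hausdorff measure in its dimension $d\in\{1,2,3\}$, so $\liminf_{\rho\to 0}\vol_{M_\infty}(T_\rho(A_{\infty,0})\setminus A_{\infty,0})/\rho^3=+\infty>0$, the pulled point has positive density, and $N_{\infty,0}=N_\infty$ as integral current spaces.

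The main obstacle I anticipate is the intrinsic flat half of the second paragraph: converting the biLipschitz bound $e^{\pm L_j}$ into a genuine estimate on $d_{\mathcal F}(N_{j,\infty},N_\infty)$ requires either an explicit filling in a common metric space (embedding both pulled spaces, say into $L^\infty$, and bounding the flat norm of the difference of the pushed currents) or an appeal to a packaged biLipschitz-stability lemma of Lakzian--Sormani type; along the way one must confirm that $\bar\psi_{j\#}$ really does transport the integral current structure of $N_{j,\infty}$ onto that of $N_\infty$ with matching mass, which is exactly where the compatibility $\psi_j(A_{j,0})=A_{\infty,0}$ is used a second time. By contrast the Gromov--Hausdorff and metric-measure parts are routine consequences of bounded distortion.
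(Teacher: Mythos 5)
Your proposal follows essentially the same route as the paper's proof: a diagonal argument through the intermediate pulled spaces $N_{j,\infty}$, with Theorem~\ref{thm-seq-sewn} handling each fixed $j$, the induced maps $\bar\psi_j$ shown to satisfy $\Lip(\bar\psi_j)\le\Lip(\psi_j)$ so that the pulled spaces converge in the Lipschitz (hence Gromov--Hausdorff and, via the Sormani--Wenger biLipschitz stability theorem, intrinsic flat) sense, and the triangle inequality closing the diagonal. Your more abstract observation that $\min$ and $+$ respect multiplicative distortion is just a compressed form of the paper's explicit three-case verification, so the two arguments coincide in substance.
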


\begin{proof}
For each $M_j^3$ in the sequence we can apply Proposition~\ref{sewn-region} to create its increasingly tightly sewn
sequence $N_{j,i}^3$ with $\lim_{i\to \infty}\delta_{j,i}=0$. 
By 
Theorem~\ref{thm-seq-sewn}, we know that
\be
N_{j,i} \GHto N_{j,\infty} \textrm{ as } i\to \infty
\ee
and
\be\label{jsubiIFconv}
N_{j,i} \Fto N_{j,0} \textrm{ as } i\to \infty
\ee
where $N_{j,0}$ is an integral current space satisfying $\bar{N}_{j,0} = N_{j,\infty}$ and $N_{j,\infty}$ is the metric space created by taking $M_j^3$ and pulling the region $A_{j,0}$ to a point as in Lemma~\ref{pulled-subset-1}. 
For each $j$ take $i_j$ sufficiently large that
\be
\delta_{i,j}<1/j \qquad \forall i \ge i_j 
\ee
\be
d_{GH}(N_{j,i}, N_{j,\infty})< 1/j \qquad \forall i \ge i_j 
\ee
and
\be
d_{\mathcal{F}}(N_{j,i}, N_{j,0})< 1/j \qquad \forall i \ge i_j 
\ee
and, in the cases we have metric measure convergence, 
\be
|\mass(N_{j,i})-\mass(N_{j,0})|<1/j.
\ee

We choose $\delta_j=\delta_{j,i_j}$ and take
$N_j^{3}=N_{j,i_j}^{3}$.   

By the triangle inequality 
we need only prove:
\be
d_{GH}(N_{j,\infty}, N_\infty)\to 0
\ee
and
\be
d_{\mathcal{F}}(N_{j,0}, N_{\infty,0}) \to 0
\ee
and, in the cases we have measure convergence, 
\be
\mass(N_{j,0}) \to \mass(N_\infty).
\ee
where $N_\infty$ is the metric space created by taking $M_\infty^3$ and pulling the region $A_{\infty,0}$ to a point as in Lemma~\ref{pulled-subset-1}.

Observe that 
there are 1-Lipschitz maps 
\be
P_{j}: M_{j} \to N_{j,\infty} \quad \text{and} \quad P_{\infty}: M_{\infty} \to N_{\infty}
\ee
defined by
\be
P_{j}(x) =
\begin{cases}
p_{j,0} \qquad &\forall\, x\in A_{j,0}\\
x \qquad &\forall\, x\notin A_{j,0}
\end{cases}
\quad \text{and} \quad %
P_{\infty}(x) =
\begin{cases}
p_{\infty,0} \qquad &\forall\, x\in A_{\infty,0}\\
x \qquad &\forall\, x\notin A_{\infty,0}.
\end{cases}
\ee

Next, define invertible maps
\be\label{defPjandPinfty}
\bar{\psi}_{j}: N_{j,\infty} \to N_{\infty} 
\quad \text{and} \quad %
\bar{\psi}_{j}\inv: N_{\infty} \to N_{j,\infty} 
\ee
by 
\be\label{defpsibar}
\bar{\psi}_{j}(x) = P_{\infty}(\psi_{j}(x))
\quad \text{and} \quad %
\bar{\psi}_{j}\inv(y) = P_{j}(\psi_{j}\inv(y))
\ee
Note that (\ref{psiA}) implies these are well-defined. 

Let $x_{1}, x_{2} \in N_{j,\infty}$, then recall that by definition of the distance $d_{N_{j,\infty}}$ in (\ref{pulled-set-def1})-(\ref{pulled-set-def3}), we have three cases. To simplify notation we shall suppress the $P_{j}$ and $P_{\infty}$ from below. 

In Case I, assume that
\be
d_{N_{j,\infty}}(x_{1},x_{2})=d_{M_{j}}(x_{1},x_{2})
\ee
so $x_{1},x_{2} \notin A_{j,0}$. By (\ref{defPjandPinfty})-(\ref{defpsibar}) and using that $\psi_{j}$ is Lipschitz we have
\begin{align*}
d_{N_{\infty}}(\bar{\psi}_{j}(x_{1}),\bar{\psi}_{j}(x_{2})) &\le d_{M_{\infty}}(\psi_{j}(x_{1}),\psi_{j}(x_{2}))\\
	&\le \Lip(\psi_{j})d_{M_{j}}(x_{1},x_{2}) \\
	&= \Lip(\psi_{j})d_{N_{j,\infty}}(x_{1},x_{2}) 
\end{align*}
so that $\Lip(\bar{\psi}_{j}) \le \Lip(\psi_{j})$.

In Case II, assume that 
\be 
d_{N_{j,\infty}}(x_{1},x_{2})=d_{M_{j}}(x_{1},A_{j,0})+d_{M_{j}}(x_{2},A_{j,0})
\ee
with $x_{1},x_{2} \notin A_{j,0}$. By (\ref{defPjandPinfty})-(\ref{defpsibar}) and using that $\psi_{j}$ is Lipschitz we have
\begin{align*}
d_{N_{\infty}}(\bar{\psi}_{j}(x_{1}),\bar{\psi}_{j}(x_{2})) &\le d_{M_{\infty}}(\psi_{j}(x_{1}),A_{\infty,0})+ d_{M_{\infty}}(\psi_{j}(x_{2}),A_{\infty,0})
		\qquad \text{($\triangle$ ineq.)}\\
	&\le \Lip(\psi_{j})\left( d_{M_{j}}(x_{1},A_{j,0})+ d_{M_{j}}(x_{2},A_{j,0})\right)\\
	&= \Lip(\psi_{j})d_{N_{j,\infty}}(x_{1},x_{2}) 
\end{align*}
so that $\Lip(\bar{\psi}_{j}) \le \Lip(\psi_{j})$.

Finally, in Case III, assume that $x_{2}=p_{j,0}$ and
\be
d_{N_{j,\infty}}(x_{1},p_{j,0})=d_{M_{j}}(x_{1},A_{j,0})
\ee
with $x_{1} \ne p_{j,0}$, or $x_{1} \notin A_{j,0}$. Then since $\bar{\psi}_{j}(x_{2})=p_{\infty,0}$ and using that $\psi_{j}$ is Lipschitz we have
\begin{align*}
d_{N_{\infty}}(\bar{\psi}_{j}(x_{1}),\bar{\psi}_{j}(x_{2})) &= d_{N_{\infty}}(\bar{\psi}_{j}(x_{1}),p_{\infty,0}) = d_{M_{\infty}}(\psi_{j}(x_{1}),A_{\infty,0})\\
	&\le \Lip(\psi_{j}) d_{M_{j}}(x_{1},A_{j,0})\\
	&= \Lip(\psi_{j})d_{N_{j,\infty}}(x_{1},p_{j,0}) 
\end{align*}
so that $\Lip(\bar{\psi}_{j}) \le \Lip(\psi_{j})$.

Thus, in all three cases we have shown that $\bar{\psi}_{j}$ is Lipschitz with $\Lip(\bar{\psi}_{j}) \le \Lip(\psi_{j})$. 
One can similarly verify that $\bar{\psi}_{j}\inv$ is Lipschitz with $\Lip(\bar{\psi}_{j}\inv) \le \Lip(\psi_{j}\inv)$. 

So,
\be
\bar{L}_{j} = \log \Lip(\bar{\psi}_{j})+ \log \Lip(\bar{\psi}_{j}\inv) \le L_{j} \to 0
\ee
by (\ref{lippsijto1}).

Thus $N_{j,\infty}$ converges to $N_\infty$ in the Lipschitz
sense. 
Since Lipschitz convergence implies Gromov-Hausdorff convergence \cite{Gromov-metric} we have
\be
N_{j,\infty} \GHto N_\infty.
\ee
Moreover, we have Intrinsic Flat convergence,
\be
N_{j,\infty} \Fto N_{\infty,0},
\ee
by Theorem~5.6 of Sormani--Wenger \cite{SW-JDG} where $N_{\infty,0} \subset N_{\infty}$ with $\bar{N}_{\infty,0} = N_{\infty}$. 

To finish, observe that the statement about metric measure convergence now follows from Lemma~\ref{L:mGHconv}. 
\end{proof}

\section{Sequences with ADM mass to 0}\label{S:sewn-admPMT}

We apply the results of the previous Section~\ref{S:method1-seqmflds} together with to the examples of the second author with Lee \cite{LeeSormani1} to construct new examples of sequences of asymptotically flat manifolds with ADM mass decreasing to zero which converge in the (pointed) Intrinsic Flat- and Gromov-Hausdorff-sense to a (weak) limit that is not the standard flat Euclidean space.

\subsection{Sequences with Stripes of Constant Curvature}\label{S:3DmanfifoldswithPSC}

We review the construction of rotationally symmetric manifolds with nonnegative scalar curvature and small ADM mass following \cite{LeeSormani1}.

\begin{defn}  \label{def-rot-sym}
Let $\RS_3$ be the class of complete $3$-dimensional 
$SO(3)$-rotationally symmetric smooth Riemannian manifolds of nonnegative scalar curvature which have no closed interior minimal hypersurfaces and either have no boundary or have a boundary which is a stable minimal hypersurface.  
\end{defn}


Here one can find simple formulas relating Hawking mass and scalar curvature, and observe that Hawking mass is increasing to the ADM mass. In fact one has an embedding into Euclidean space: 

\begin{lem} (\cite{LeeSormani1}) \label{lem-graph}
Given $(M^3,g) \in \RS_3$, we can find a rotationally
symmetric Riemannian isometric embedding of $M^3$
into Euclidean space
as the graph of some radial function $z=z(r)$ satisfying $z'(r)\geq 0$.  In graphical coordinates, we have 
\be \label{eqn-graph-g}
g=(1+[z'(r)]^2)dr^2 + r^2 g_0,
\ee
with $r\ge r_{min}$ and the following formulae for scalar curvature, area, mean curvature, Hawking mass and its derivative
in terms of the radial coordinate $r$:
\begin{alignat}{1}
\mathrm{R} (r) 
&= \frac{2}{1+(z')^2}\left(\frac{z'}{r}\right)\left(\frac{z'}{r}+ \frac{2z''}{1+(z')^2}\right)\\
A (r)& =\omega_{2} r^{2}\\
H (r)& = \frac{2}{r\sqrt{1+(z')^2}}\\
\mathrm{m}_{\mathrm{H}} (r)& = \frac{r}{2}\left(\frac{(z')^2}{1+(z')^2}\right) \label{mHgraph} \\
\mathrm{m}_{\mathrm{H}}' (r)& = \frac{r^{2}}{4} \mathrm{R} \label{mH'}
\end{alignat}
This Riemannian isometric embedding is unique up to a choice of $z_{min}=z(r_{min})$.
\end{lem}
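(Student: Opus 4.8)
The plan is to construct the embedding explicitly from the warped-product structure of a rotationally symmetric metric and then compute all five quantities as routine differential-geometric consequences of the resulting graphical form. First I would write the metric on $M^3 \in \RS_3$ in geodesic-polar / area-radius coordinates. Since $M^3$ is $SO(3)$-rotationally symmetric with no interior minimal hypersurfaces, the area-radius function $r$ (defined so that the orbit sphere through a point has area $\omega_2 r^2$) is a valid global coordinate with $r \ge r_{\min}$, and the metric takes the form $g = \phi(r)^2\, dr^2 + r^2 g_0$ for some positive function $\phi$, where $g_0$ is the round metric on $S^2$. The absence of closed interior minimal hypersurfaces guarantees $r$ is strictly monotone along radial geodesics so this parametrization is nondegenerate, and the minimal-boundary condition fixes the behavior at $r=r_{\min}$.

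Next I would produce the isometric embedding as a graph $z=z(r)$ in $\mathbb{E}^3$ (using cylindrical coordinates $(r,\theta,\varphi) \mapsto$ the point at height $z(r)$ over the sphere of radius $r$). The induced metric of such a graph is exactly $(1+(z')^2)\,dr^2 + r^2 g_0$, so matching with $g = \phi(r)^2\,dr^2 + r^2 g_0$ forces $1+(z')^2 = \phi(r)^2$, i.e. $z'(r) = \sqrt{\phi(r)^2 - 1}$. This requires $\phi(r) \ge 1$; I would note that this is exactly the condition guaranteed by nonnegative scalar curvature (via the Hamiltonian constraint / the sign of the Hawking mass), and that the resulting $z'(r) \ge 0$ gives the claimed monotonicity. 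Integrating $z'$ from $r_{\min}$ yields $z(r)$, unique up to the additive constant $z_{\min}=z(r_{\min})$, establishing the uniqueness clause.

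With the explicit form $g=(1+(z')^2)dr^2+r^2 g_0$ in hand, each formula is a direct computation. The area $A(r)=\omega_2 r^2$ is immediate from the orbit-sphere factor. The mean curvature $H(r)$ of the orbit sphere comes from differentiating the area element along the unit normal $\phi^{-1}\partial_r$; one gets $H = \tfrac{2}{r}\,\phi^{-1} = \tfrac{2}{r\sqrt{1+(z')^2}}$. The scalar curvature $\mathrm{R}(r)$ I would obtain from the standard warped-product curvature formulas (or directly from the Gauss equation for the graph), yielding the stated expression in $z', z''$. For the Hawking mass, I would substitute $A=\omega_2 r^2$ and the computed $H$ into the definition $\mathrm{m}_{\mathrm{H}}(\Sigma)=\tfrac12\sqrt{A/\omega_2}\,(1-\tfrac{1}{16\pi}\int_\Sigma H^2)$, using $\int_\Sigma H^2 = H^2\cdot\omega_2 r^2$ by symmetry, which collapses to $\mathrm{m}_{\mathrm{H}}(r)=\tfrac{r}{2}\cdot\tfrac{(z')^2}{1+(z')^2}$. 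Finally the derivative identity $\mathrm{m}_{\mathrm{H}}'(r)=\tfrac{r^2}{4}\mathrm{R}$ follows by differentiating $\mathrm{m}_{\mathrm{H}}(r)$ and comparing with the $\mathrm{R}(r)$ formula.

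The only genuinely substantive step is the existence of the embedding, i.e. verifying $\phi(r)^2 = 1+(z')^2 \ge 1$ so that $z'=\sqrt{\phi^2-1}$ is real; everything else is bookkeeping. \emph{I expect this to be the main obstacle}: one must show that the radial metric coefficient of a manifold in $\RS_3$ is at least $1$, which is precisely where the nonnegative scalar curvature hypothesis enters (through the differential inequality satisfied by the mass function, equivalently $\mathrm{m}_{\mathrm{H}}'\ge 0$ via $\mathrm{R}\ge 0$), together with the minimal-boundary condition ensuring $z'(r_{\min})$ starts at the correct value. All the displayed formulas then serve as an internal consistency check that the embedding has been set up correctly.
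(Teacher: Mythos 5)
The paper does not prove this lemma at all --- it is quoted from Lee--Sormani \cite{LeeSormani1} and used as a black box --- so there is no internal proof to compare against; your outline is essentially the argument of the cited source and is sound. Two points need repair, though. First, the ambient space is $\E^{4}$, not $\E^{3}$: the graph of $z(r)$ over the two-dimensional orbit spheres of radius $r$ in $\mathbb{R}^{3}$ lives in $\mathbb{R}^{3}\times\mathbb{R}$, and indeed the paper later invokes this lemma to embed the $M_j^3$ into $\E^4$. Second, your justification of the key inequality $\phi\ge 1$ is circular as written: you appeal to $\mathrm{m}_{\mathrm{H}}'=\tfrac{r^2}{4}\mathrm{R}\ge 0$, but in your plan that identity is only derived at the end, from the graphical coordinates whose existence it is supposed to guarantee. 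The fix is to run the mass computation intrinsically: for any metric $\phi(r)^2\,dr^2+r^2g_0$ set $m(r):=\tfrac{r}{2}\bigl(1-\phi(r)^{-2}\bigr)$, which is the Hawking mass of the orbit sphere computed directly from $A=\omega_2 r^2$ and $H=2/(r\phi)$ with no embedding in sight (note the correct normalization has $(A/\omega_2)^{1/2}$, not $A/\omega_2$ as in the paper's introductory display); verify $m'(r)=\tfrac{r^2}{4}\mathrm{R}$ by direct differentiation; then use $\mathrm{R}\ge 0$ together with the normalization at the inner end --- $m(r_{\min})=r_{\min}/2$ at a minimal boundary, where $\phi\to\infty$, or $m(0)=0$ at a smooth center --- to get $m\ge 0$, i.e.\ $\phi\ge 1$, so $z'=\sqrt{\phi^2-1}$ is real and nonnegative. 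The no-interior-minimal-surface hypothesis is what keeps $\phi<\infty$ (equivalently $m(r)<r/2$) away from $r_{\min}$, so that the area radius is a genuine coordinate. With that reordering, the remaining formulae are, as you say, bookkeeping, and uniqueness up to the constant of integration $z_{\min}$ is immediate.
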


\begin{lem}(\cite{LeeSormani1})\label{lem-ex}
There is a bijection between elements of
$\RS_3$ and increasing functions
$\mathrm{m}_{\mathrm{H}}:[\rmin,\infty)\to\R$ such that 
\be \label{lem-ex-1}
\mathrm{m}_{\mathrm{H}}(\rmin)=\frac{1}{2} \rmin
\ee 
and
\be
\mathrm{m}_{\mathrm{H}}(r)<\frac{1}{2}r
\ee 
for $r>\rmin\ge 0$.  In this section we will call these functions
\emph{admissible Hawking mass functions}.  

Given an admissible Hawking function, the function $z: [0, \infty) \to \R$ defined via the formula
\be \label{lem-ex-m-H-to-z}
 z(\bar{r}) = \int_{\rmin}^{\bar{r}} \sqrt{ \frac{2\mathrm{m}_{\mathrm{H}}(r)}
{r-2\mathrm{m}_{\mathrm{H}}(r)}}\,dr
\ee
determines a rotationally symmetric manifold in $\RS_3$. 
\end{lem}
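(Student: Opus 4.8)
The plan is to establish the two directions of the correspondence separately and then check that they are mutual inverses, using the formulae already recorded in Lemma~\ref{lem-graph}. The algebraic heart of the matter is the Hawking-mass relation
\be
\mathrm{m}_{\mathrm{H}}(r)=\frac{r}{2}\left(\frac{(z')^2}{1+(z')^2}\right),
\ee
which I would solve for $(z')^2$ to obtain
\be
(z')^2=\frac{2\mathrm{m}_{\mathrm{H}}(r)}{r-2\mathrm{m}_{\mathrm{H}}(r)}.
\ee
This single identity ties the two maps together: the forward map reads off $\mathrm{m}_{\mathrm{H}}$ from a given embedding, while the reverse map recovers $z$ by integrating the square root of the right-hand side, which is exactly formula~(\ref{lem-ex-m-H-to-z}).

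For the forward direction, given $(M^3,g)\in\RS_3$ I would apply Lemma~\ref{lem-graph} to obtain the graphical embedding and hence a function $\mathrm{m}_{\mathrm{H}}(r)$ on $[\rmin,\infty)$. That this function is increasing follows at once from (\ref{mH'}), since $\mathrm{m}_{\mathrm{H}}'(r)=\frac{r^2}{4}\mathrm{R}\ge 0$ by nonnegativity of the scalar curvature. The strict inequality $\mathrm{m}_{\mathrm{H}}(r)<\frac{1}{2}r$ for $r>\rmin$ is exactly the assertion that $z'(r)$ is finite there: equality $\mathrm{m}_{\mathrm{H}}(r_0)=\frac{1}{2}r_0$ would force $z'(r_0)=\infty$ and, via $H(r)=2/(r\sqrt{1+(z')^2})$, a closed interior minimal hypersurface at $r_0$, which is excluded by the definition of $\RS_3$. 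The normalization (\ref{lem-ex-1}) then splits into two cases: if $\partial M^3\ne\emptyset$ the boundary is minimal, so $H=0$ forces $z'\to\infty$ and $\frac{(z')^2}{1+(z')^2}\to 1$ at $\rmin$; if $\partial M^3=\emptyset$ then $\rmin=0$ and smoothness at the center forces $z'(0)=0$, giving $\mathrm{m}_{\mathrm{H}}(0)=0$. In either case the admissibility conditions hold.

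For the reverse direction, given an admissible $\mathrm{m}_{\mathrm{H}}$ I would define $z$ by (\ref{lem-ex-m-H-to-z}) and set $g=(1+(z')^2)dr^2+r^2g_0$. The integrand is real and finite for every $r>\rmin$ precisely because admissibility gives $0\le\mathrm{m}_{\mathrm{H}}(r)<\frac{1}{2}r$, so $r-2\mathrm{m}_{\mathrm{H}}(r)>0$. Differentiating (\ref{lem-ex-m-H-to-z}) and running the displayed algebra in reverse shows that the Hawking mass of this metric is the prescribed $\mathrm{m}_{\mathrm{H}}$, so the two maps are mutual inverses up to the single free constant $z_{\min}$ already identified in Lemma~\ref{lem-graph}, which is a vertical translation and hence an isometry. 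Nonnegativity of the scalar curvature follows by reading (\ref{mH'}) backwards, $\mathrm{R}=\frac{4}{r^2}\mathrm{m}_{\mathrm{H}}'\ge 0$, and the strict inequality $\mathrm{m}_{\mathrm{H}}<\frac{1}{2}r$ keeps $z'$ finite on $(\rmin,\infty)$, so $H(r)>0$ there and no closed interior minimal hypersurface appears.

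The main obstacle I expect is the regularity analysis at $r=\rmin$, where $z'\to\infty$ and the integrand of (\ref{lem-ex-m-H-to-z}) is singular. Writing $r-2\mathrm{m}_{\mathrm{H}}(r)=(r-\rmin)\bigl(1-2\mathrm{m}_{\mathrm{H}}'(\xi)\bigr)$ by the mean value theorem (using $\mathrm{m}_{\mathrm{H}}(\rmin)=\frac{1}{2}\rmin$) exhibits the singularity as of order $(r-\rmin)^{-1/2}$, which is integrable, so $z$ is finite and the sphere $\{r=\rmin\}$ is reached at finite height. I would then verify that the metric extends smoothly across this horizon when $\rmin>0$, or across the center when $\rmin=0$, and that the resulting $\{r=\rmin\}$ sphere is a stable minimal hypersurface, so that the constructed manifold genuinely lies in $\RS_3$. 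This Schwarzschild-type matching at the horizon (or at the origin) is the delicate step; the monotonicity and the inequality $\mathrm{m}_{\mathrm{H}}<\frac{1}{2}r$ are precisely the hypotheses that make it possible.
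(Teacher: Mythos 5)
The paper does not actually prove this lemma: it is imported verbatim from \cite{LeeSormani1} and used as a black box, so there is no internal argument to compare yours against. Your strategy --- solving (\ref{mHgraph}) for $(z')^2=2\mathrm{m}_{\mathrm{H}}/(r-2\mathrm{m}_{\mathrm{H}})$, reading monotonicity off (\ref{mH'}), and integrating to recover $z$ --- is exactly the mechanism underlying the cited result, and your forward direction (including the case split at $\rmin$ between a minimal boundary with $z'\to\infty$ and a smooth center with $z'(0)=0$) is sound.

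The reverse direction, however, has two genuine gaps, both traceable to the fact that the admissibility conditions as quoted here (increasing, $\mathrm{m}_{\mathrm{H}}(\rmin)=\tfrac12\rmin$, $\mathrm{m}_{\mathrm{H}}(r)<\tfrac12 r$) are weaker than what the construction needs. First, your mean value theorem step gives $r-2\mathrm{m}_{\mathrm{H}}(r)=(r-\rmin)\bigl(1-2\mathrm{m}_{\mathrm{H}}'(\xi)\bigr)$, but concluding that the integrand is of order $(r-\rmin)^{-1/2}$ requires $1-2\mathrm{m}_{\mathrm{H}}'$ to stay bounded away from $0$ near $\rmin$; monotonicity alone only yields the opposite bound $r-2\mathrm{m}_{\mathrm{H}}(r)\le r-\rmin$. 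For $\mathrm{m}_{\mathrm{H}}(r)=\tfrac12 r-(r-\rmin)^2$ near $\rmin>0$ (admissible as stated) one gets $r-2\mathrm{m}_{\mathrm{H}}(r)=2(r-\rmin)^2$ and the integral (\ref{lem-ex-m-H-to-z}) diverges at its lower limit, so the inner sphere sits at infinite distance and the claimed minimal boundary never appears. Second, when $\rmin=0$ the admissible function $\mathrm{m}_{\mathrm{H}}(r)=r/4$ gives $(z')^2\equiv 1$, i.e.\ the metric $2\,dr^2+r^2g_0$, which has a cone point at the origin and is not a smooth manifold. So the ``smooth extension at $\rmin$'' step you defer is not merely delicate: it actually fails for some functions satisfying the hypotheses as quoted, and the precise statement in \cite{LeeSormani1} carries additional regularity and asymptotic conditions at $\rmin$ that rule such examples out. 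Your outline is the right skeleton, but a complete proof must either import those extra conditions or establish them.
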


In particular taking a constant Hawking mass, $m_H(r)=m_0$, we have
\be\label{lem-schwarzchild}
z(r) = \sqrt{8m_0\left( r-2m_0\right)}, \quad r \in [2m_0,\infty) 
\ee
with metric
\be
g = \left( 1+ \frac{2m_{0}}{r-2m_{0}}\right) dr^{2} + r^{2} g_{0},
\ee
which is half of the Riemannian Schwarzschild space. Notice these examples are scalar flat since $\mathrm{R}=0$ by (\ref{mH'}).

In order to allow for sewing, we need a region with positive scalar curvature. The second author and Lee show that one can create ``stripes'' of constant curvature $K>0$ within the class $\RS_{3}$ using admissible Hawking functions:

\begin{lem} (\cite{LeeSormani1})\label{lem-stripe}
A manifold $M^3\in \RS_{3}$ has constant sectional curvature, $K>0$,
on $r^{-1}(a,b)$, for $\rmin < a < b$, iff $r^{-1}(a,b)$ is an annulus in a sphere of
radius $1/K^{1/2}$ iff $\mathrm{m}_{\mathrm{H}}(r)=r^3 K/2$ for $r \in (a,b)$.
\end{lem}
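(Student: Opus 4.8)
The plan is to reduce all three conditions to a single algebraic condition on the Hawking mass function $\mathrm{m}_{\mathrm{H}}(r)$ using the explicit formulas of Lemma~\ref{lem-graph}, and then to recognize the resulting radial profile as that of a round sphere. The organizing idea is a clean formula for the sectional curvature of the orbit spheres $\{r=\mathrm{const}\}$ in terms of $\mathrm{m}_{\mathrm{H}}$, after which the equivalences drop out almost immediately. First I would invert (\ref{mHgraph}): solving $\mathrm{m}_{\mathrm{H}}=\tfrac{r}{2}(z')^2/(1+(z')^2)$ for $(z')^2$ gives $(z')^2=2\mathrm{m}_{\mathrm{H}}/(r-2\mathrm{m}_{\mathrm{H}})$, hence $1+(z')^2=r/(r-2\mathrm{m}_{\mathrm{H}})$, so that the metric (\ref{eqn-graph-g}) takes the form
\[
g=\frac{r}{r-2\mathrm{m}_{\mathrm{H}}}\,dr^2+r^2 g_0
\]
on $r^{-1}(a,b)$.

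The crux of the argument is the curvature of the orbit spheres. The sphere $\Sigma_r=\{r=\mathrm{const}\}$ carries the induced metric $r^2 g_0$, hence intrinsic Gaussian curvature $1/r^2$, and its two principal curvatures are equal, each $\tfrac12 H(r)$ with $H$ as in Lemma~\ref{lem-graph}. By the Gauss equation the \emph{ambient} sectional curvature of a plane tangent to $\Sigma_r$ is
\[
K_{\mathrm{sph}}(r)=\frac{1}{r^2}-\Bigl(\tfrac12 H(r)\Bigr)^2=\frac{1}{r^2}\cdot\frac{(z')^2}{1+(z')^2}=\frac{2\mathrm{m}_{\mathrm{H}}(r)}{r^3},
\]
where the last equality again uses (\ref{mHgraph}). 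This identity, that the orbit spheres have tangential ambient curvature exactly $2\mathrm{m}_{\mathrm{H}}/r^3$, is the geometric heart of the lemma and makes the profile $\mathrm{m}_{\mathrm{H}}=r^3K/2$ transparent.

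With this in hand the three-way equivalence follows directly. For constant sectional curvature $\Rightarrow$ $\mathrm{m}_{\mathrm{H}}=r^3K/2$: if every sectional curvature equals $K$ then in particular $K_{\mathrm{sph}}(r)=K$, so $2\mathrm{m}_{\mathrm{H}}(r)/r^3=K$, giving $\mathrm{m}_{\mathrm{H}}(r)=r^3K/2$. For $\mathrm{m}_{\mathrm{H}}=r^3K/2$ $\Rightarrow$ round annulus: substituting into the displayed metric yields
\[
g=\frac{1}{1-Kr^2}\,dr^2+r^2 g_0,
\]
which is exactly the round metric of the sphere of radius $1/\sqrt{K}$ written in the orbit-radius coordinate $r=K^{-1/2}\sin\theta$; thus $r^{-1}(a,b)$ is the annular band between the two corresponding latitudes. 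Finally, an annulus inside a round sphere of radius $1/\sqrt{K}$ has constant sectional curvature $K$ by definition, closing the loop.

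I expect the only delicate point to be establishing $K_{\mathrm{sph}}=2\mathrm{m}_{\mathrm{H}}/r^3$ cleanly from the given data, i.e. correctly identifying $\tfrac12 H$ as the (equal) principal curvatures and applying Gauss with the right sign. Should that bookkeeping prove fussy, an alternative route uses the scalar-curvature identity (\ref{mH'}): constant sectional curvature $K$ in dimension three forces $\mathrm{R}\equiv 6K$, whence $\mathrm{m}_{\mathrm{H}}'(r)=\tfrac{r^2}{4}\cdot 6K=\tfrac{3Kr^2}{2}$ and $\mathrm{m}_{\mathrm{H}}(r)=r^3K/2+C$; the integration constant is then pinned to $C=0$ by matching the orbit-sphere curvature (or the remaining radial sectional curvature) to $K$. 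I would nonetheless prefer the direct orbit-curvature computation, since it avoids the integration constant entirely and exhibits the meaning of the condition $\mathrm{m}_{\mathrm{H}}=r^3K/2$ at once.
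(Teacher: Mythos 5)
Your proposal is correct. Note, however, that this paper offers no proof of Lemma~\ref{lem-stripe} to compare against: it is imported verbatim from \cite{LeeSormani1}, so the only material available here is the list of formulas in Lemma~\ref{lem-graph}, and your argument is a valid, self-contained derivation from exactly那 that data. The computations all check out: inverting (\ref{mHgraph}) gives $(z')^2=2\mathrm{m}_{\mathrm{H}}/(r-2\mathrm{m}_{\mathrm{H}})$ and hence $g=(1-2\mathrm{m}_{\mathrm{H}}/r)^{-1}dr^2+r^2g_0$; the orbit spheres are umbilic with both principal curvatures equal to $H/2=(r\sqrt{1+(z')^2})^{-1}$, so the Gauss equation gives the tangential ambient curvature $1/r^2-(H/2)^2=2\mathrm{m}_{\mathrm{H}}/r^3$ with the correct sign; and the substitution $r=K^{-1/2}\sin\theta$ identifies $(1-Kr^2)^{-1}dr^2+r^2g_0$ with the round metric on a band of the sphere of radius $K^{-1/2}$ (the admissibility condition $\mathrm{m}_{\mathrm{H}}(r)<r/2$ guarantees $Kr^2<1$, so the band stays in an open hemisphere and the coordinate is legitimate). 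Your cyclic chain (i)$\Rightarrow$(iii)$\Rightarrow$(ii)$\Rightarrow$(i) cleanly yields the three-way equivalence, and you are right to prefer the orbit-curvature identity over the scalar-curvature route via (\ref{mH'}), since the latter leaves an undetermined integration constant (Schwarzschild, with $\mathrm{R}\equiv 0$ but $\mathrm{m}_{\mathrm{H}}\equiv m_0\neq 0$, shows this constant genuinely matters). The identity $K_{\mathrm{sph}}=2\mathrm{m}_{\mathrm{H}}/r^3$ is also the standard mechanism behind this fact in the rotationally symmetric literature, so your route is essentially the expected one.
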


See Figure~\ref{fig-lem-stripe}. 

\begin{figure}[htbp]
\begin{center}
\includegraphics[width=4in]{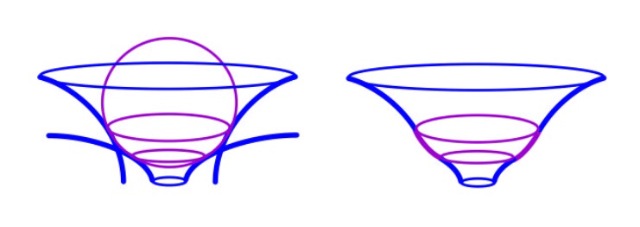}
\caption{An asymptotically flat manifold with a stripe of positive scalar curvature whose ADM mass is small.}
\label{fig-lem-stripe}
\end{center}
\end{figure}

We will need one final lemma from \cite{LeeSormani1}:

\begin{lem}(\cite{LeeSormani1})\label{ex-stripes}
Fix  $\alpha>0$.
Given any increasing sequence, 
\be
\{r_1, r_2,...\}\subset [\mathrm{m}_{\mathrm{fix}}/2,\infty),
\ee
there exists $M^3\in \RS_3$ with constant
sectional curvature on stripes $r^{-1}(a_i,b_i)$
where $(a_i, b_i)\subset [r_{2i-1}, r_{2i}]$
and $\mathrm{m}_{\mathrm{ADM}}(M)<\alpha$ 
and $\partial M=\emptyset$.
\end{lem}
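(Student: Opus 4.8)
The plan is to use the bijection of \lemref{lem-ex} to reduce the entire construction to building a single admissible Hawking mass function $\mathrm{m}_{\mathrm{H}}:[0,\infty)\to\R$, and then to read off the geometry from \lemref{lem-graph} and \lemref{lem-stripe}. Taking $\rmin=0$ forces $\mathrm{m}_{\mathrm{H}}(0)=0$, which corresponds to a manifold that caps off smoothly at the pole and therefore has $\partial M=\emptyset$; since the Hawking mass is nondecreasing (because $\mathrm{m}_{\mathrm{H}}'=r^2\mathrm{R}/4\ge 0$) and increases to the ADM mass, $\madm(M)=\lim_{r\to\infty}\mathrm{m}_{\mathrm{H}}(r)$, so the mass bound becomes the single requirement $\lim_{r\to\infty}\mathrm{m}_{\mathrm{H}}(r)<\alpha$. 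By \lemref{lem-stripe}, prescribing $\mathrm{m}_{\mathrm{H}}(r)=r^3K_i/2$ on a subinterval $(a_i,b_i)\subset[r_{2i-1},r_{2i}]$ is exactly what produces constant sectional curvature $K_i$ there, so the stripes are built in by fiat, and the only freedom I exploit is the choice of the subintervals, the curvatures $K_i$, and the interpolation on the complementary gaps.

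First I would fix a smooth round spherical cap near the origin, i.e. take $\mathrm{m}_{\mathrm{H}}(r)=K_0 r^3/2$ on a small initial interval with $K_0$ tiny; this makes $z'(r)\to 0$ as $r\to 0$ and gives a genuine smooth pole, ensuring $\partial M=\emptyset$. Membership in $\RS_3$ is then automatic: building $\mathrm{m}_{\mathrm{H}}$ nondecreasing gives $\mathrm{R}\ge 0$, and the absence of closed interior minimal hypersurfaces holds once $\mathrm{m}_{\mathrm{H}}(r)<r/2$, since then $H(r)=2/(r\sqrt{1+(z')^2})>0$ for all $r>0$. Next, on each stripe I set $\mathrm{m}_{\mathrm{H}}(r)=r^3K_i/2$, which has boundary values $V_i^-:=a_i^3K_i/2$ and $V_i^+:=b_i^3K_i/2=V_i^-(b_i/a_i)^3$ together with positive boundary slopes $3r^2K_i/2$. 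I then interpolate $\mathrm{m}_{\mathrm{H}}$ across each gap by a smooth, strictly increasing function matching these values and first derivatives at the stripe endpoints. For a monotone join to exist I need the prescribed values to form a strictly increasing chain $0<V_1^-<V_1^+<V_2^-<V_2^+<\cdots$ converging to a limit below $\alpha$.

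To arrange the bookkeeping I would choose the stripes thin, say $b_i/a_i=1+\eta_i$ with $\sum_i\eta_i<\infty$ and the targets $V_i^-$ bounded, so that the within-stripe increases $\sum_i(V_i^+-V_i^-)=\sum_i V_i^-\big((1+\eta_i)^3-1\big)$ are summable; choosing the across-gap increases $V_{i+1}^--V_i^+$ summable as well makes the total increase $V_1^-+\sum_i\big[(V_i^+-V_i^-)+(V_{i+1}^- - V_i^+)\big]$ as small as desired, in particular below a target mass $\alpha'<\min\{\alpha,\mathrm{m}_{\mathrm{fix}}/4\}$ (this costs nothing, since achieving a mass smaller than $\alpha$ certainly achieves mass $<\alpha$). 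Admissibility $\mathrm{m}_{\mathrm{H}}(r)<r/2$ is then automatic: on a stripe $2\mathrm{m}_{\mathrm{H}}(b_i)/b_i=K_ib_i^2=2V_i^+/b_i<2\alpha'/r_1\le 4\alpha'/\mathrm{m}_{\mathrm{fix}}<1$ because $b_i\ge r_1\ge\mathrm{m}_{\mathrm{fix}}/2$, and on the gaps $\mathrm{m}_{\mathrm{H}}$ stays bounded by $\alpha'$ at radius $\ge r_1$. The smooth, strictly increasing gap interpolation with prescribed endpoint $1$-jets is a standard bump-function construction, and it is the one genuinely fiddly step: the endpoint slopes emerging from the cubic stripe pieces are positive, so I must verify that a monotone $C^\infty$ join with the prescribed jets and without overshooting $r/2$ exists whenever $V_{i+1}^->V_i^+$ with a definite gap, which is where I expect to spend the most care. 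Everything else—nonnegativity of scalar curvature (from monotonicity of $\mathrm{m}_{\mathrm{H}}$), asymptotic flatness (from boundedness of $\mathrm{m}_{\mathrm{H}}$), and the value of $\madm$—then follows directly from \lemref{lem-graph}.
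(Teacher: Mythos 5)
This lemma is imported from \cite{LeeSormani1} and the paper gives no proof of it at all, so there is nothing internal to compare against; your proposal has to stand on its own, and it essentially does. Reducing everything to the construction of a single admissible Hawking mass function via the bijection of Lemma~\ref{lem-ex}, planting the cubics $\mathrm{m}_{\mathrm{H}}(r)=r^3K_i/2$ on subintervals of $[r_{2i-1},r_{2i}]$ to force the stripes via Lemma~\ref{lem-stripe}, taking $\rmin=0$ to kill the boundary, and controlling $\madm=\lim_{r\to\infty}\mathrm{m}_{\mathrm{H}}(r)$ by making both the within-stripe and across-gap increments summable and small is exactly the mechanism Lee and Sormani use; membership in $\RS_3$ (nonnegative scalar curvature, no interior minimal surfaces) comes for free from monotonicity and $\mathrm{m}_{\mathrm{H}}(r)<r/2$, as you say.

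Two points need tightening. First, matching only the endpoint $1$-jets across the gaps produces a $C^1$ Hawking mass function, hence a metric that need not be smooth, whereas $\RS_3$ consists of smooth manifolds; the clean fix is to exploit that the lemma only demands constant curvature on \emph{some} $(a_i,b_i)\subset[r_{2i-1},r_{2i}]$, so you may let the cubic formula hold on a slightly larger open interval and perform the monotone transition with a function that agrees with the cubic identically near the stripe, matching all jets at once. Second, your admissibility check $\mathrm{m}_{\mathrm{H}}(r)<r/2$ only covers $r\ge r_1$; between the pole cap and the first stripe the bound $\mathrm{m}_{\mathrm{H}}\le\alpha'$ does not by itself beat $r/2$ for small $r$, so you must also keep the first interpolant below, say, $K_1r^3/2$ on $[0,a_1]$ (or simply run the first cubic from $r=0$). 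Both are routine repairs, and with them the argument is complete.
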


Based on Lemma~\ref{ex-stripes}, the second author and Lee constructed examples of a sequence of asymptotically flat manifolds with $ADM$ mass decreasing to zero with an increasing number of long wells that converged in the pointed Intrinsic Flat sense to euclidean space but have no Lipschitz or Gromov-Haudroff converging subsequences:

\begin{example}\label{LeeSormani-manywells} (\cite{LeeSormani1}) 
There exists a sequence of asymptotically flat manifolds $M_{i}^{3}$ with no interior minimal surfaces and empty boundary and $\lim_{i \to \infty} \mathrm{m}_{\mathrm{ADM}}(M_{i}^{3}) = 0$
such that
for any $\alpha_{0},D>0$ the sequence of regions $T_{D}(\Sigma) \subset M_{i}^{3}$ where $\vol_{2}(\Sigma)=\alpha_{0}$ converge in the intrinsic flat sense to $T_{D}(\Sigma) \subset \E^{3}$ but do not even have Lipschitz or Gromov-Hausdorff converging subsequences. 
\end{example}

\subsection{Failing Almost Rigidity of the Positive Mass Theorem}

In this section we apply the sewing technique to sequences of asymptotically flat manifolds with $ADM$ mass decreasing to zero to give new examples which converge (by Theorem~\ref{prop-seq-sewn}) in the pointed Gromov-Hausdorff and pointed Intrinsic Flat sense to weak limits that are not the standard flat euclidean space.

To complete our constructions, we need a region of constant positive sectional curvature to sew, we use Lemma~\ref{lem-stripe}. See Figure~\ref{fig-lem-stripe} .   

Unlike the sequence of Lee and Sormani, the sequence in Example~\ref{sewnAFnotEuclid-bubble} converges in pointed Gromov-Hausdorff sense. 

\begin{figure}[htbp]
\begin{center}
\includegraphics[width=4in]{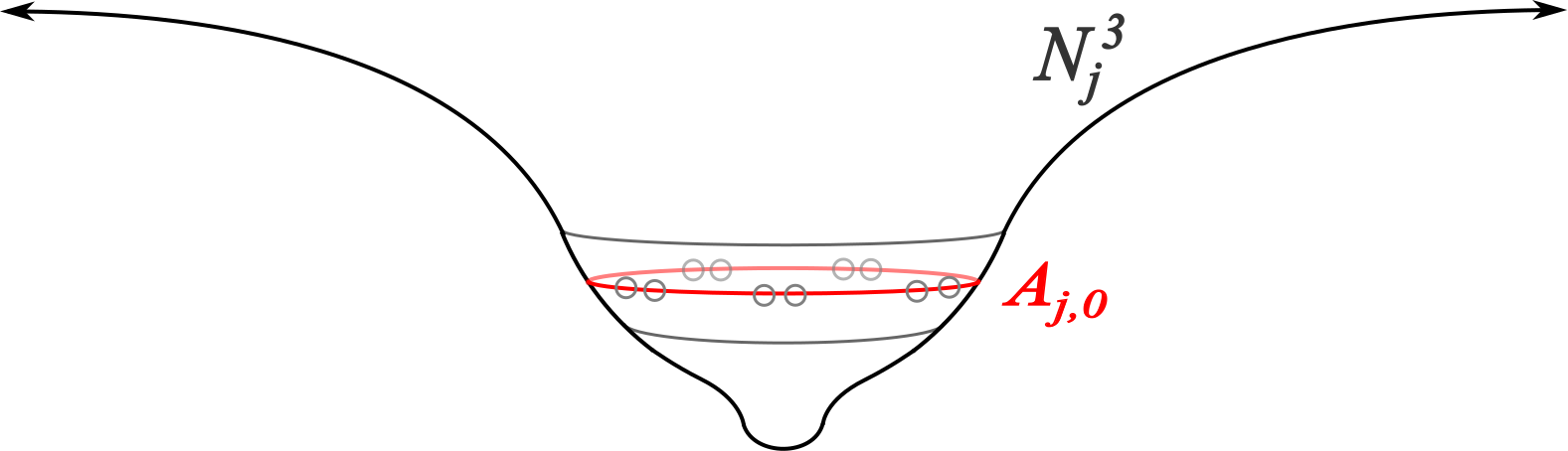}
\caption{An element of the sequence of asymptotically flat sewn manifolds constructed in Example~\ref{sewnAFnotEuclid-bubble}.}
\label{fig-sewnschwarzchild-sphere}
\end{center}
\end{figure}

\begin{example}\label{sewnAFnotEuclid-bubble} 
There exists a sequence of asymptotically flat manifolds $N_{j}^{3}$ with 
nonnegative scalar curvature, empty boundary and $\lim_{j \to \infty} \mathrm{m}_{\mathrm{ADM}}(M_{j}^{3}) = 0$ 
that
converges in the pointed Gromov-Hausdorff and pointed Intrinsic Flat sense to $\E^{3}$ with a closed curve 
pulled to a point $p_{0}$ in the following sense: 
given any $\alpha_{0}>0$ and $D>0$, 
\be
d_{GH}(T_{D}(\Sigma_{\alpha_{0}}) \subset N_{j}^{3}, 
	T_{D}(\Sigma_{\alpha_{0}}) \subset N_{\infty}) \to 0
\ee
and
\be
d_{\Fm}(T_{D}(\Sigma_{\alpha_{0}}) \subset N_{j}^{3},  
	T_{D}(\Sigma_{\alpha_{0}}) \subset N_{\infty}) \to 0
\ee
where $N_{\infty}$ which is $\E^{3}$ with a closed curve 
pulled to a point $p_{0}$ as in Lemma~\ref{pulled-subset-1}
and
$\Sigma_{\alpha_{0}}$ is the surface with $\vol_{2}(\Sigma_{\alpha_{0}})=\alpha_{0}$.  
Thus, $N_{\infty}$ is homeomorphic to $\E^{3} \disjointunion_{p_{0}} \mathbb{S}^{3}$, which is a wrinkled $\E^{3}$ with a wrinkled sphere attached. See Figure~\ref{fig-sewnschwarzchild-sphere}.

\end{example}

\begin{proof}
Let $\alpha_{0}, D>0$ be given. 
Set
\be\label{anchor-radii}
	r_{0} = \left(\frac{\alpha_{0}}{4\pi}\right)^{1/2}
	\qquad \text{and} \qquad
	r_{1} = 
		\begin{cases}
		r_{0}-D/2, \quad &r_{0}-D \ge 0\\
		r_{0}/2, &\text{otherwise}
		\end{cases}
\ee

Fix $j \in \N$. Let $\delta_j=1/j$,
\be\label{stripe-radii}
	r_{j,2} = r_{1}-1/j \qquad \text{and } \qquad
	r_{j,3} = r_{1}+1/j.
\ee

By Lemma~\ref{ex-stripes}, there exists rotationally symmetric manifolds $\bar{M}_{j}^{3} \in \RS_{3}$ with $\textrm{m}_{\textrm{ADM}}(\bar{M}_j^3)<\delta_j$ and with constant sectional curvature $K_j>0$ on the stripe 
\be
	r^{-1}(a_{j},b_{j}) \subset r^{-1}[r_{j,2},r_{j,3}],
\ee
for some $a_{j}<b_{j}$ in the interval $[r_{j,2},r_{j,3}]$.

Let $M_{j}^{3}=Cl(T_{D}(\Sigma_{j,0})) \subset \bar{M}_{j}^{3}$ and $M_{\infty}^{3} = Cl(T_{D}(\Sigma_{0})) \subset \E^{3}$, where $\Sigma_{j,0}$ and $\Sigma_{0}$ are the surfaces with area equal to $\alpha_{0}$ and, thus, are at a distance of $r_{0}$ from the axis (in graphical coordinates). Further, let $A_{j,0}$ be closed geodesic circle inside $r^{-1}((a_{j}+b_{j})/2)$ in $M_{j}^{3}$. Let $A_{\infty,0}$ be the circle centered at $0$ in $\E^{3}$ of radius $r_{1}$. Observe that by our choices in (\ref{anchor-radii}) and (\ref{stripe-radii}) the stripe of constant sectional curvature $r^{-1}(a_{j},b_{j})$ always belongs to $M_{j}$ (including $j=\infty)$.

The tubular neighborhoods $M_{j}^{3}$ and $M_{\infty}^{3}$ are isometrically embedded into $\E^{4}$ by Lemma~\ref{lem-graph}, so there exist biLipschitz maps $\psi_{j}: M_{j}^{3} \to M_{\infty}^{3}$ with $\psi_{j}(A_{j,0})=A_{\infty,0}$ and (\ref{lippsijto1}).
Moreover,  
$M_{j}^{3}$ and $M_{\infty}^{3}$ are compact, so we can apply Theorem~\ref{prop-seq-sewn} to obtain a sequence of sewn manifolds $N_{j}^{3}$ satisfying
\be
d_{mGH}(N_{j}^{3}, N_{\infty}) \to 0
\ee
and
\be
d_{\Fm}(N_{j}^{3}, N_{\infty}) \to 0
\ee
where $N_{\infty}$ is $M_{\infty}$ with the circle $A_{\infty,0}$ pulled to a point $p_{0}$ as in Proposition~\ref{pulled-string}.
\end{proof}

Similarly, we can pull a 2-sphere to a point instead of a curve:

\begin{example} 
There exists a sequence of asymptotically flat manifolds $N_{j}^{3}$ with 
nonnegative scalar curvature, empty boundary and $\lim_{j \to \infty} \mathrm{m}_{\mathrm{ADM}}(M_{j}^{3}) = 0$ 
that
converges in the pointed Gromov-Hausdorff and pointed Intrinsic Flat sense to $\E^{3}$ with a closed $2$-sphere
pulled to a point $p_{0}$ in the following sense: 
given any $\alpha_{0}>0$ and $D>0$, 
\be
d_{GH}(T_{D}(\Sigma_{\alpha_{0}}) \subset N_{j}^{3}, 
	T_{D}(\Sigma_{\alpha_{0}}) \subset N_{\infty}) \to 0
\ee
and
\be
d_{\Fm}(T_{D}(\Sigma_{\alpha_{0}}) \subset N_{j}^{3},  
	T_{D}(\Sigma_{\alpha_{0}}) \subset N_{\infty}) \to 0
\ee
where $N_{\infty}$ which is $\E^{3}$ with a closed $2$-sphere 
pulled to a point $p_{0}$ as in Lemma~\ref{pulled-subset-1}
and
$\Sigma_{\alpha_{0}}$ is the surface with $\vol_{2}(\Sigma_{\alpha_{0}})=\alpha_{0}$.  
Thus, $N_{\infty}$ is homeomorphic to $\E^{3} \disjointunion_{p_{0}} \mathbb{S}^{3}$, which is a wrinkled $\E^{3}$ with a wrinkled sphere attached. See Figure~\ref{fig-sewnschwarzchild-sphere}.

\end{example}

The proof is nearly identical to that of Example~\ref{sewnAFnotEuclid-bubble}.

In the next example, we obtain a limit of asymptotically flat manifolds with ADM decreasing to zero that is homeomorphic to $\E^{3}$ but not isometric. 

\begin{figure}[htbp]
\begin{center}
\includegraphics[width=4in]{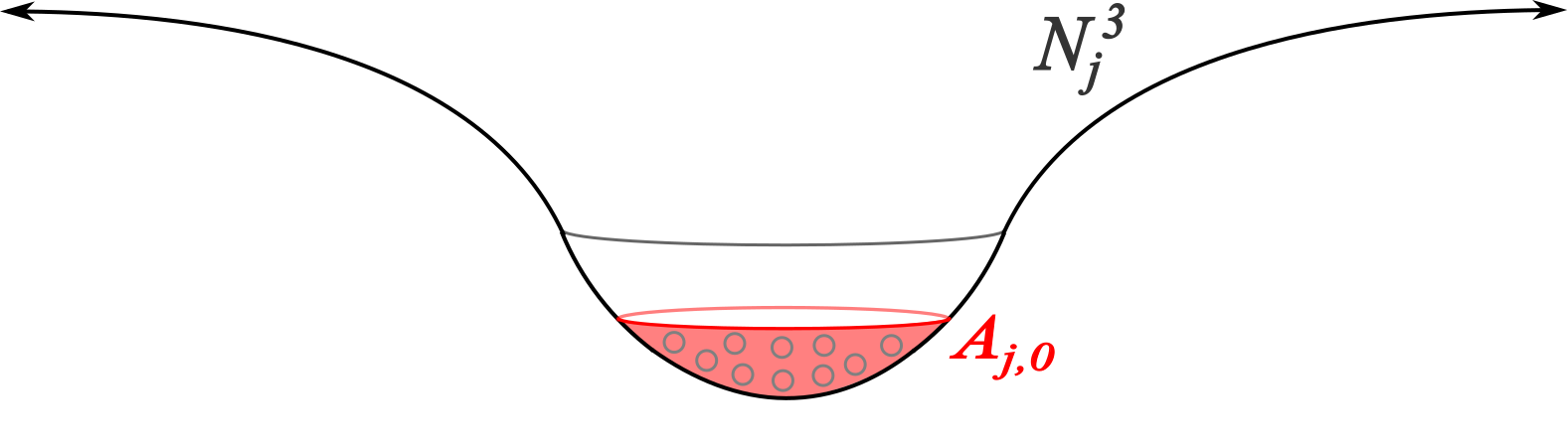}
\caption{An element of the sequence of asymptotically flat sewn manifolds constructed in Example~\ref{sewnAFnotEuclid-homeo}.}
\label{fig-sewnschwarzchild-ball}
\end{center}
\end{figure}

\begin{example}\label{sewnAFnotEuclid-homeo} 
There exists a sequence of asymptotically flat manifolds $N_{j}^{3}$ with 
nonnegative scalar curvature, empty boundary and $\lim_{j \to \infty} \mathrm{m}_{\mathrm{ADM}}(M_{j}^{3}) = 0$ 
that
converges in the pointed Gromov-Hausdorff and pointed Intrinsic Flat sense to $\E^{3}$ with a closed unit $3$-ball
pulled to a point $p_{0}$ in the following sense: 
given any $\alpha_{0}>0$ and $D>0$ suffificently large, 
\be
d_{GH}(T_{D}(\Sigma_{\alpha_{0}}) \subset N_{j}^{3}, 
	T_{D}(\Sigma_{\alpha_{0}}) \subset N_{\infty}) \to 0
\ee
and
\be
d_{\Fm}(T_{D}(\Sigma_{\alpha_{0}}) \subset N_{j}^{3},  
	T_{D}(\Sigma_{\alpha_{0}}) \subset N_{\infty}) \to 0
\ee
where $N_{\infty}$ which is $\E^{3}$ with a closed $3$-ball 
pulled to a point $p_{0}$ as in Lemma~\ref{pulled-subset-1}
and
$\Sigma_{\alpha_{0}}$ is the surface with $\vol_{2}(\Sigma_{\alpha_{0}})=\alpha_{0}$.  
\end{example}

\begin{proof}
Let $\alpha_{0}>0$ be given. 
Set
\be\label{anchor-radii-2}
	r_{0} = \left(\frac{\alpha_{0}}{4\pi}\right)^{1/2}
\ee
and let $D$ be large enough that $D>r_{0}$. 

Fix $j \in \N$. Let $\delta_j=1/j$ and 
\be\label{stripe-radii-2}
	r_{j,2} = 0 \qquad \text{and } \qquad
	r_{j,3} = r_{0}/2.
\ee

By Lemma~\ref{ex-stripes}, there exists rotationally symmetric manifolds $\bar{M}_{j}^{3} \in \RS_{3}$ with $\textrm{m}_{\textrm{ADM}}(\bar{M}_j^3)<\delta_j$ and with constant sectional curvature $K_j>0$ on the stripe 
\be
	r^{-1}(0,b_{j}) \subset r^{-1}[r_{j,2},r_{j,3}],
\ee
for some $a_{j}=0<b_{j}$ in the interval $[r_{j,2},r_{j,3}]$.

Let $M_{j}^{3}=Cl(T_{D}(\Sigma_{j,0})) \subset \bar{M}_{j}^{3}$ and $M_{\infty}^{3} = Cl(T_{D}(\Sigma_{0})) \subset \E^{3}$, where $\Sigma_{j,0}$ and $\Sigma_{0}$ are the surfaces with area equal to $\alpha_{0}$ and, thus, are at a distance of $r_{0}$ from the axis (in graphical coordinates). 
Further, let $A_{j,0}$ be closed 3-ball $r^{-1}(0,b_{j}/2)$ in $M_{j}^{3}$. 
Let $A_{\infty,0}$ be 3-ball centered at $0$ in $\E^{3}$ of radius $r_{0}/2$. Observe that by our choices in (\ref{anchor-radii-2}) and (\ref{stripe-radii-2}) the stripe of constant sectional curvature $r^{-1}(0,b_{j})$ always belongs to $M_{j}$ (including $j=\infty)$ and it is at the bottom.

The tubular neighborhoods $M_{j}^{3}$ and $M_{\infty}^{3}$ are isometrically embedded into $\E^{4}$ by Lemma~\ref{lem-graph}, so there exist biLipschitz maps $\psi_{j}: M_{j}^{3} \to M_{\infty}^{3}$ with $\psi_{j}(A_{j,0})=A_{\infty,0}$ and (\ref{lippsijto1}).
Moreover,  
$M_{j}^{3}$ and $M_{\infty}^{3}$ are compact, so we can apply Theorem~\ref{prop-seq-sewn} to obtain a sequence of sewn manifolds $N_{j}^{3}$ satisfying
\be
d_{GH}(N_{j}^{3}, N_{\infty}) \to 0
\ee
and
\be
d_{\Fm}(N_{j}^{3}, N_{\infty}) \to 0
\ee
where $N_{\infty}$ is $M_{\infty}$ with the ball $A_{\infty,0}$ pulled to a point $p_{0}$ as in Proposition~\ref{pulled-string}.
\end{proof}

\bibliographystyle{alpha}
\bibliography{basilio2}
\end{document}